\newtheorem{thm}{Theorem}
\newtheorem{lem}[thm]{Lemma}
\newtheorem{defn}[thm]{Definition}
\numberwithin{equation}{section}
\newenvironment{Proof}[1][Proof.]{\begin{trivlist}
\item[\hskip \labelsep {\bfseries #1}]}{\flushright
$\Box$\end{trivlist}}
\begin{document}
\noindent{\large 
The algebraic and geometric classification of \\nilpotent binary Lie algebras}
\footnote{
The authors thank  Prof. Dr. Yury Volkov for   constructive discussions about degenerations of algebras and  Prof. Dr. Pasha Zusmanovich for   discussions about  $\mathfrak{CD}$-algebras;
   two referees and  Prof. Dr. Eamonn O'Brien for  detailed reading of this work and for   suggestions which   improved the final version of the paper.
The work was supported by RFBR 18-31-20004. 
}$^,$\footnote{Corresponding Author: kaygorodov.ivan@gmail.com}  

   \

   {\bf Hani Abdelwahab$^{a}$, Antonio Jes\'us Calder\'on$^{b}$ \& Ivan   Kaygorodov$^{c}$
}
\

 \ 
 
{\tiny

$^{a}$ Department of Mathematics, Faculty of Sciences, Mansoura University, Mansoura, Egypt

$^{b}$ Department of Mathematics, Faculty of Sciences, University of Cadiz, Cadiz, Spain

$^{c}$ CMCC, Universidade Federal do ABC. Santo Andr\'e, Brasil

\smallskip

   E-mail addresses:

\smallskip

Hani Abdelwahab (haniamar1985@gmail.com)

Antonio Jes\'us Calder\'on (ajesus.calderon@uca.es)

Ivan   Kaygorodov (kaygorodov.ivan@gmail.com)

}

\

\ 

\noindent{\bf Abstract}: 
{\it We  give a complete algebraic classification of nilpotent binary
Lie algebras of dimension at most $6$ over an arbitrary base field of characteristic not $2$
and a complete geometric classification of nilpotent binary Lie algebras of dimension $6$ over $\mathbb C.$
As an application, we give an  algebraic and geometric classification of nilpotent anticommutative $\mathfrak{CD}$-algebras of dimension at most $ 6.$}

\

\noindent {\bf Keywords}: {\it Nilpotent algebras, binary Lie algebras, Malcev algebras,
$\mathfrak{CD}$-algebras, Lie
algebras, algebraic classification, geometric classification, degeneration.}

\ 

\noindent {\bf MSC2010}: 17D10, 17D30.

\section*{Introduction}

There are many results related to both the algebraic and geometric 
classification
of small dimensional algebras in the varieties of Jordan, Lie, Leibniz, 
Zinbiel algebras;
for algebraic results  see, for example, \cite{kv16,Graaf,G63,Ann2,Kuzmin};
for geometric results see, for example, \cite{   GRH2, ikv17, kppv, kpv, kv16}.
Here we give an algebraic and geometric classification of low dimensional 
nilpotent binary Lie algebras.

Malcev defined  binary Lie algebras as algebras such that {\it every two-generated subalgebra is a Lie algebra}  \cite{M55}.
Identities of the variety of binary Lie algebras were described by Gainov  \cite{G57}.
Note that every Lie algebra is a Malcev algebra and every Malcev algebra is a binary Lie algebra.
The systematic study of Malcev and binary Lie algebras began with the  work of Sagle \cite{Sagle}.
Properties of binary Lie algebras were studied by  Filippov,  Kaygorodov, Kuzmin, Popov, Shirshov, Volkov and many others \cite{G63, kpv,Kuzmin}.
Another interesting subclass of binary Lie algebras is anticommutative $\mathfrak{CD}$-algebras.
The idea of the definition of $\mathfrak{CD}$-algebras is to generalize a certain property of Jordan and Lie algebras --- {\it every commutator of two  multiplication operators is a derivation}.
Commutative  $\mathfrak{CD}$-algebras (sometimes called  {\it Lie triple algebras}) were considered in \cite{Sidorov_1981,kps19}.

Our method of classification of nilpotent binary Lie algebras is based on calculation of central extensions of smaller nilpotent algebras from the same variety.
The algebraic study of central extensions of Lie and non-Lie algebras has a long  history \cite{Ann,ss78}.
Skjelbred and Sund \cite{ss78} used central extensions of Lie algebras for a classification of nilpotent Lie algebras.
After  using the method of \cite{ss78}   all non-Lie central extensions of  all $4$-dimensional Malcev algebras \cite{Ann},
all anticommutative central extensions of $3$-dimensional anticommutative algebras \cite{cfk182}
and some others were described.
Also, 
all $4$-dimensional nilpotent associative algebras,
all $4$-dimensional nilpotent Novikov algebras,
all $4$-dimensional nilpotent bicommutative algebras,
all $5$-dimensional nilpotent Jordan agebras,
all $5$-dimensional nilpotent restricted Lie agebras,
all $6$-dimensional nilpotent Lie algebras,
all $6$-dimensional nilpotent Malcev algebras 
and some others were described (see, \cite{Ann2,kpv19,degr1,Graaf}).

\section{The algebraic classification of binary Lie algebras}
\subsection{Definitions and notation.}

Throughout the paper, $\mathbb{F}$  denotes a field of characteristic
not $2$ and the multiplication of an algebra is specified by giving
only the nonzero products among the basis elements.

\bigskip

%An algebra $\left( {\bf A},[-,-]\right) $ over a base field $\mathbb{F}$
%is called anticommutative if $[x,x]=0$ for all $x$ in ${\bf A}$. Since
%the characteristic of $\mathbb{F\neq }2$, the identity $[x,x]=0$ is
%equivalent to the identity $[x,y]+[y,x]=0$.

In an anticommutative algebra $\left( {\bf A},\left[ -,-\right] \right) $%
\ we define the Jacobian $\mathcal{J}(x,y,z)$ of elements $x,y,z$ in $%
{\bf A}$ in the following way:%
%\begin{equation}
\begin{equation*}
\mathcal{J}(x,y,z):=[[x,y],z]+[[y,z],x]+[[z,x],y].
\end{equation*}%
%
%
%
%
%
%
%
%
%
%
%
%
%
%
%
%
%\end{equation}%
It is clear that the Jacobian $\mathcal{J}(x,y,z)$ is skew-symmetric in its
arguments.

\begin{defn}
Let $\left( {\bf A},[-,-]\right) $ be  an anticommutative algebra.
Then $\left( {\bf A},[-,-]\right) $ is a:

\begin{itemize}
\item Lie algebra if%
\begin{equation*}
\mathcal{J}(x,y,z)=0,\text{ for all }x,y,z\in {\bf A}.
\end{equation*}

\item Malcev algebra if%
\begin{equation}
\mathcal{J}(x,y,\left[ x,z\right] )=\left[ \mathcal{J}(x,y,z),x\right],\text{ for all }x,y,z\in {\bf A}.  \label{Malcev id}
\end{equation}

\item Binary Lie algebra if%
\begin{equation}
\mathcal{J}(\left[ x,y\right] ,x,y)=0,\text{ for all }x,y\in {\bf A}.
\label{BL id}
\end{equation}
\end{itemize}
\end{defn}

Every Lie algebra is a Malcev algebra and every
Malcev algebra is a binary Lie algebra.

\medskip

The linearization of the identity (\ref{Malcev id}) is
$$\begin{array}{rcl}
\left[ \left[ w,y\right] ,\left[ x,z\right] \right] &=& \left[ \left[ \left[ w,x \right] ,y\right] ,z\right] +\left[ \left[ \left[ x,y\right] ,z\right] ,w \right] +\\
&& \left[ \left[ \left[ y,z\right] ,w\right] ,x\right] +\left[ \left[\left[ z,w\right] ,x\right] ,y\right],
\end{array}$$
for all $x,y,z,w\in {\bf A}$ (see \cite{Kuzmin1,Sagle}). Further, the
linearization of the identity (\ref{BL id})\ is%
\begin{equation}
\mathcal{J}(\left[ x,y\right] ,z,t)+\mathcal{J}(\left[ x,t\right] ,z,y)+%
\mathcal{J}(\left[ z,y\right] ,x,t)+\mathcal{J}(\left[ z,t\right] ,x,y)=0,
\label{BL.id.lin}
\end{equation}%
for all $x,y,z,t\in {\bf A}$ (see \cite{Kuzmin1,Sagle}).

\medskip

%Let ${\bf A}$ be a binary Lie algebra. 
We define inductively 
${\bf A}^{1}={\bf A}$ and 
$${\bf A}^{n+1}=[{\bf A}^{n},{\bf A}^{1}]+ [{\bf A}^{n-1},{\bf A}^2]+ \ldots + [{\bf A}^{1},{\bf A}^{n}].$$ 
The algebra ${\bf A}$ is  {\it nilpotent} if ${\bf A}^{n}=0.$

\medskip

We state  main results of the first part of this paper.

\begin{thm}
\label{main1}Let $\mathcal{N}_{\rm{BL}}^{6}({\mathbb{F}})$ denote the
number of $6$-dimensional nilpotent binary Lie algebras over $ \mathbb{F}.$ Then%
\begin{equation*}
\mathcal{N}_{\rm{BL}}^{6}({\mathbb{F}})=41 +2|\mathbb{F}^{\ast
}|+5\left\vert \mathbb{F}^{\ast }/(\mathbb{F}^{\ast })^{2}\right\vert 
\end{equation*}
where $|\mathbb{F}^{\ast }|$ and $\left\vert \mathbb{F}^{\ast }/(\mathbb{F}%
^{\ast })^{2}\right\vert $ denote the, possibly infinite, cardinality of the
multiplicative group $\mathbb{F}^{\ast }$ and the quotient group of  $\mathbb{F}^{\ast }$ by the subgroup $(\mathbb{F}^{\ast
})^{2}=\{x^{2}:x\in \mathbb{F}^{\ast }\}$, respectively.
\end{thm}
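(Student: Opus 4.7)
The plan is to prove Theorem \ref{main1} by the Skjelbred--Sund method of central extensions, adapted to the variety of binary Lie algebras. Concretely, every $6$-dimensional nilpotent binary Lie algebra $\mathbf{A}$ with nontrivial center either decomposes as a direct sum of a lower-dimensional nilpotent binary Lie algebra and an abelian ideal, or arises as a one-dimensional central extension $0\to \mathbb{F}z \to \mathbf{A}\to \mathbf{B}\to 0$ of a $5$-dimensional nilpotent binary Lie algebra $\mathbf{B}$. Since every nonzero nilpotent algebra has nontrivial center, this covers all cases. Thus the first step is to collect (or reprove) the complete classifications of nilpotent binary Lie algebras of dimensions $\le 5$, and then, for each such $\mathbf{B}$, enumerate the non-equivalent one-dimensional central extensions of $\mathbf{B}$ that remain in the variety of binary Lie algebras.

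The second step is to set up the cohomological bookkeeping. For an anticommutative algebra $\mathbf{B}$ one forms the space $\mathrm{Z}^2(\mathbf{B},\mathbb{F})$ of skew-symmetric bilinear forms, the subspace $\mathrm{B}^2(\mathbf{B},\mathbb{F})$ of coboundaries, and their quotient $\mathrm{H}^2(\mathbf{B},\mathbb{F})$. Imposing the linearized binary Lie identity \eqref{BL.id.lin} on the twisted product picks out a subspace $\mathrm{Z}_{\mathrm{BL}}^2(\mathbf{B},\mathbb{F})\subseteq \mathrm{Z}^2(\mathbf{B},\mathbb{F})$, and the corresponding $\mathrm{H}_{\mathrm{BL}}^2(\mathbf{B},\mathbb{F})$ parametrizes central extensions staying in the variety. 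To obtain non-split extensions one restricts to cocycles $\theta$ whose radical does not intersect the annihilator of $\mathbf{B}$ nontrivially; this guarantees the resulting algebra has no one-dimensional direct factor. Then the automorphism group $\mathrm{Aut}(\mathbf{B})$ acts on $\mathrm{H}_{\mathrm{BL}}^2(\mathbf{B},\mathbb{F})$ and isomorphism classes of extensions correspond to orbits of this action on the subset of cocycles with trivial intersection condition.

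The third step, and the computational bulk, is to carry out this orbit classification for each $5$-dimensional $\mathbf{B}$ in the list. For each $\mathbf{B}$, one computes $\mathrm{Aut}(\mathbf{B})$, determines $\mathrm{Z}_{\mathrm{BL}}^2$ and $\mathrm{B}^2$ explicitly, and chooses representatives for the orbits on the Grassmannian of lines (or appropriate subsets) in $\mathrm{H}_{\mathrm{BL}}^2(\mathbf{B},\mathbb{F})$. Each orbit yields one isomorphism class of $6$-dimensional nilpotent binary Lie algebra, and after removing duplicates across different $\mathbf{B}$'s (cases where the same $6$-dimensional algebra is an extension of several non-isomorphic $\mathbf{B}$'s) one assembles the full list. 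Adding the $6$-dimensional algebras that split off a one-dimensional summand (extensions of the $5$-dimensional ones by a direct factor of $\mathbb{F}$), one obtains the total count. The explicit form $41+2|\mathbb{F}^{\ast}|+5|\mathbb{F}^{\ast}/(\mathbb{F}^{\ast})^{2}|$ arises because most orbits over $\mathbb{F}$ reduce to a single canonical representative, but in a small number of cases the stabilizer action on a parameter $\alpha\in\mathbb{F}^{\ast}$ either is trivial (contributing an $|\mathbb{F}^{\ast}|$-family) or acts through squaring (contributing an $|\mathbb{F}^{\ast}/(\mathbb{F}^{\ast})^{2}|$-family).

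The main obstacle is precisely this last point: the careful orbit analysis for those cocycles producing parametric families. Over an algebraically closed field every $\mathbb{F}^{\ast}$ and $\mathbb{F}^{\ast}/(\mathbb{F}^{\ast})^{2}$ collapses to a single point, but over an arbitrary field of characteristic not $2$ one must identify for each problematic family exactly which scalar automorphisms survive as stabilizers, and must verify (often by a direct invariant argument distinguishing non-conjugate cocycles) that the ostensibly different parameters really give non-isomorphic algebras. A secondary technical difficulty is to check, each time a candidate cocycle is written down, that \eqref{BL.id.lin} indeed holds on the whole extension (not just on $\mathbf{B}$), which amounts to a finite but lengthy verification for each of the $5$-dimensional base algebras.
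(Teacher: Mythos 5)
Your overall strategy --- Skjelbred--Sund central extensions inside the binary Lie variety, with $\mathrm{Aut}(\mathbf{B})$-orbits on $\mathrm{H}^2_{BL}(\mathbf{B},\mathbb{F})$ classifying the extensions --- is the same machinery the paper sets up. But your reduction step contains a genuine gap. You claim every $6$-dimensional nilpotent binary Lie algebra either splits off an abelian ideal or is a \emph{one}-dimensional central extension of a $5$-dimensional algebra by a cocycle $\theta$ with $\theta^{\perp}\cap\mathrm{Ann}(\mathbf{B})=0$. This dichotomy is false. If $\mathbf{A}$ has no annihilator component but $\dim\mathrm{Ann}(\mathbf{A})\geq 2$, then for \emph{any} realization of $\mathbf{A}$ as a one-dimensional central extension of $\mathbf{B}=\mathbf{A}/V$ one has $\mathrm{Ann}(\mathbf{A})=\left(\theta^{\perp}\cap\mathrm{Ann}(\mathbf{B})\right)\oplus V$, so $\theta^{\perp}\cap\mathrm{Ann}(\mathbf{B})$ is forced to be nonzero and your restricted enumeration misses $\mathbf{A}$ entirely. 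A concrete example is the Lie algebra $[e_1,e_2]=e_5$, $[e_3,e_4]=e_6$: two-dimensional annihilator, no annihilator component, not a direct sum. The correct reduction, which the paper states, is to $s$-dimensional central extensions of $(n-s)$-dimensional algebras with $s=\dim\mathrm{Ann}(\mathbf{A})$, classified by orbits on the Grassmannian $G_s$ of subspaces of $\mathrm{H}^2_{BL}$, not only on lines; dropping the condition $\theta^{\perp}\cap\mathrm{Ann}(\mathbf{B})=0$ instead would break the orbit-to-isomorphism-class correspondence you rely on.

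Beyond that, your route differs from the paper's in a way worth noting: the paper does \emph{not} recompute the whole variety. Theorem \ref{main1} is obtained by citing the known classifications of $6$-dimensional nilpotent Lie algebras and of nilpotent non-Lie Malcev algebras, and computing from scratch only the non-Malcev binary Lie algebras. The labor-saving device is the partition $\mathbf{T}_s(\mathbf{M})=\mathbf{R}_s(\mathbf{M})\cup\mathbf{U}_s(\mathbf{M})$, where $\mathbf{U}_s$ consists of the cocycle subspaces producing non-Malcev extensions; since every nilpotent binary Lie algebra of dimension at most $5$ is Malcev and $\mathbf{U}_1$ is nonempty only for $\mathbf{L}_{5,8}$ and $\mathbf{M}_{5,1}$, only two base algebras need detailed treatment, yielding $\mathbf{B}_{6,1}^{\alpha}$, $\mathbf{B}_{6,2}$, $\mathbf{B}_{6,3}$, and the count in the theorem is the sum of the three contributions. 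Your plan, even after repairing the gap above, would amount to re-deriving the entire $6$-dimensional nilpotent Lie and Malcev classifications, which is a much larger computation than the paper actually performs.
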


\begin{thm}
\label{main2}Every $6$-dimensional nilpotent non-Malcev binary Lie algebra over  ${%
\mathbb{F}}$  is isomorphic to one of the following algebras:

\begin{itemize}

\item ${\bf B}_{6,1}^{\alpha}:
[e_{1},e_{2}]=e_{4},[e_{1},e_{3}]=e_{5},[e_{2},e_{3}]=\alpha e_{6},[e_{4},e_{5}]=e_{6};$

\item ${\bf B}_{6,2}:\left[ e_{1},e_{2}\right] =e_{3},\left[ e_{3},e_{4}%
\right] =e_{5},[e_{4},e_{5}]=e_{6};$

\item ${\bf B}_{6,3}:\left[ e_{1},e_{2}\right] =e_{3},\left[ e_{3},e_{4}%
\right] =e_{5},\left[ e_{1},e_{3}\right] =e_{6},[e_{4},e_{5}]=e_{6}.$
\end{itemize}

Among these algebras there are precisely the following isomorphisms:

\begin{itemize}
\item ${\bf B}_{6,1}^{\alpha }\cong {\bf B}_{6,1}^{\beta }$ if and
only if there is an $\lambda \in \mathbb{F}^{\ast }$ such that $\beta
=\lambda ^{2}\alpha $.

\end{itemize}
\end{thm}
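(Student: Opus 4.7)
The plan is to apply the Skjelbred--Sund central extension procedure set up in the preceding subsections, starting from the classification of nilpotent binary Lie algebras of dimension at most $5$ established earlier. Any $6$-dimensional nilpotent binary Lie algebra ${\bf B}$ has a non-trivial annihilator meeting ${\bf B}\cdot{\bf B}$, so quotienting by a one-dimensional such ideal yields a $5$-dimensional nilpotent binary Lie algebra ${\bf A}$ together with a binary Lie $2$-cocycle; two cocycles give isomorphic $6$-dimensional extensions iff their classes lie in the same orbit of ${\rm Aut}({\bf A})$ acting on the subset of $H^{2}_{\mathrm{BL}}({\bf A},\mathbb{F})$ whose representatives have trivial radical.

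For each representative ${\bf A}$ in the list of nilpotent binary Lie algebras of dimension $\leq 5$, I would (i) derive from the linearised identity~(\ref{BL.id.lin}), taking one argument in the central fibre, the linear system cutting out $Z^{2}_{\mathrm{BL}}({\bf A},\mathbb{F})$; (ii) compute $B^{2}({\bf A},\mathbb{F})$ as the image of the coboundary map; (iii) compute ${\rm Aut}({\bf A})$ and its induced action on $H^{2}_{\mathrm{BL}}({\bf A},\mathbb{F})$; and (iv) enumerate the ${\rm Aut}({\bf A})$-orbits of one-dimensional subspaces admitting a representative with trivial radical, writing down for each orbit the resulting $6$-dimensional bracket on ${\bf A}\oplus\mathbb{F}e_6$. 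This produces a finite list of candidate $6$-dimensional nilpotent binary Lie algebras.

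From this list I would then discard the Malcev algebras, detected by verifying or failing identity~(\ref{Malcev id}) on a generating set and matching the outcome to the known $6$-dimensional nilpotent Malcev classification. What remains should be, after comparison of invariants (lower central series dimensions, annihilator, derived subalgebra), precisely the families ${\bf B}_{6,1}^{\alpha}$, ${\bf B}_{6,2}$, ${\bf B}_{6,3}$, pairwise non-isomorphic. For ${\bf B}_{6,1}^{\alpha}\cong {\bf B}_{6,1}^{\beta}$ an isomorphism $\varphi$ must preserve the annihilator $\langle e_6\rangle$ and, modulo it, the derived subalgebra $\langle \bar e_4,\bar e_5\rangle$; in normalised coordinates this reduces $\varphi$ to a diagonal action $e_i\mapsto a_i e_i$ on $e_1,e_2,e_3$ with $e_6\mapsto a_1^{2}a_2 a_3\, e_6$, and matching $\varphi([e_2,e_3]_{\alpha})=[\varphi(e_2),\varphi(e_3)]_{\beta}$ then forces $\beta=a_1^{2}\alpha$, so one may take $\lambda=a_1$; conversely an explicit rescaling realises $\beta=\lambda^{2}\alpha$.

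The hard part is step~(iv): each of the many $5$-dimensional seed algebras carries its own parameter space of cocycles, and the orbit analysis must be carried out in each case without duplication or gaps; tracking which cocycles have trivial radical, so that the extension is genuinely $6$-dimensional rather than a reindexing of a lower-dimensional seed, is the bookkeeping step that is most error-prone.
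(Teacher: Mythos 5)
Your overall strategy coincides with the paper's (Skjelbred--Sund central extensions of the $5$-dimensional seeds, followed by an $\mathrm{Aut}$-orbit analysis on cohomology classes), but as written it has a completeness gap. Your reduction only reaches algebras that are $1$-dimensional central extensions ${\bf A}_{\theta}$ of a $5$-dimensional seed with $\theta^{\bot}\cap \mathrm{Ann}({\bf A})=0$, i.e.\ algebras whose annihilator is exactly one-dimensional and which have no annihilator component. A $6$-dimensional nilpotent binary Lie algebra with $\dim \mathrm{Ann}\geq 2$, or one of the form ${\bf B}'\oplus\mathbb{F}x$, never shows up in your step (iv), so you must separately prove that all such algebras are Malcev. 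The paper supplies exactly this: if ${\bf A}$ is non-Malcev then $\dim \mathrm{Ann}({\bf A})\leq n-5$, because otherwise ${\bf A}/\mathrm{Ann}({\bf A})$ has dimension at most $4$, hence is a Lie algebra with $\mathrm{H}_{BL}^{2}=\mathrm{H}_{M}^{2}$ (this rests on Kuzmin's theorem that nilpotent binary Lie algebras of dimension at most $4$ are Lie); and annihilator components are excluded because every $5$-dimensional nilpotent binary Lie algebra is Malcev. Without these observations your enumeration could a priori miss non-Malcev algebras.

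The second issue is that the decisive content of the theorem is the explicit orbit computation, which your plan defers. The paper's device for making it tractable is the subspace $\mathrm{Z}_{M}^{2}\subseteq \mathrm{Z}_{BL}^{2}$ and the partition of ${\bf T}_{1}$ into ${\bf R}_{1}$ and ${\bf U}_{1}$: non-Malcev extensions correspond precisely to orbits on ${\bf U}_{1}$, and Table 2 shows ${\bf U}_{1}\neq\emptyset$ only for ${\bf L}_{5,8}$ and ${\bf M}_{5,1}$, so only two seeds require an orbit analysis, with the invariant $\delta=C_{23}C_{45}-C_{24}C_{35}+C_{25}C_{34}$ (respectively the vanishing of $(C_{13},C_{23})$) separating the orbits. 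Your plan of computing all extensions of all ten seeds and then testing the Malcev identity afterwards is workable but contains no argument that the enumeration closes up into exactly the three stated families. Finally, your ``only if'' direction for ${\bf B}_{6,1}^{\alpha}\cong{\bf B}_{6,1}^{\beta}$ assumes the isomorphism acts diagonally; the relevant automorphisms of ${\bf L}_{5,8}$ allow arbitrary mixing of $e_{2},e_{3}$ and lower-triangular corrections, and one must check, as the paper does via the full polynomial system (in particular $a_{42}a_{53}-a_{43}a_{52}=0$), that these produce no further identifications. The conclusion $\beta=a_{11}^{2}\alpha$ survives, but not for the reason you give.
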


%\begin{rem}
% For every $\alpha \in \mathbb F$, the algebra ${\bf M}%
%_{6,10}^{\alpha \neq -1}$ is non Lie nilpotent Malcev algebra. But, in case of $%
%\alpha =-1,$ the algebras ${\bf M}_{6,10}^{\alpha =-1}$ and ${\bf M}%
%_{6,14}$ are isomorphic by means of the mapping $\varphi :{\bf M}%
%_{6,10}^{\alpha =-1}\longrightarrow {\bf M}_{6,14}$ given by $\varphi
%\left( e_{1}\right) =e_{2},\varphi \left( e_{2}\right) =e_{1}-e_{4},\varphi
%\left( e_{3}\right) =e_{1}+e_{4},\varphi \left( e_{4}\right)
%=-e_{3}-e_{6},\varphi \left( e_5\right) =-e_{3}+e_{6},\varphi \left(
%e_{6}\right) =e_{5}.$
%\end{rem}

The proofs of Theorem \ref{main1} and Theorem \ref{main2} follow from 
the algebraic classification of $6$-dimensional nilpotent Lie algebras over $\mathbb F$ given in \cite{Graaf},
the algebraic classification of $6$-dimensional nilpotent non-Lie Malcev algebras over $\mathbb F$  given in \cite{Ann2}
and 
the algebraic classification of $6$-dimensional non-Malcev binary Lie algebras over $\mathbb F$ (see Section \ref{dim 6}).

\subsection{A  method for  the algebraic classification of nilpotent algebras}

%To do that we previously need to recover certain techniques and results, developed by the authors in \cite{Ann,} in order to classify the $n$-dimensional non-Lie Malcev algebras with $(n-4)$-dimensional annihilator over an %algebraically closed field of characteristic $0$.

Let ${\bf A}$ be a binary Lie algebra over $\mathbb{F}$  and let ${\bf V}$ be a vector space over
 ${\mathbb{F}}$. Then the $\mathbb{F}$-linear space $%
\rm{Z}_{BL}^{2}\left( {\bf A},{\bf V}\right) $ is defined as
the set of all skew-symmetric bilinear maps $\theta :{\bf A}\times 
{\bf A}\longrightarrow {\bf V}$ such that%
\begin{equation}
\theta \left( \lbrack \lbrack x,y],x],y\right) =\theta (\left[ \left[ x,y%
\right] ,y\right] ,x)  \label{property}
\end{equation}%
for all $x,y\in {\bf A}$. For a linear map $f$ from ${\bf A}$ to $%
{\bf V}$, if we write $\delta f\colon {\bf A}\times {\bf A}%
\rightarrow {\bf V}$ by $\delta f\left( x,y\right) =f(\left[ x,y\right] )$, then $%
\delta f\in \rm {Z}_{BL}^{2}\left( {\bf A},{\bf V}\right) $. We
define $\rm{B}^{2}\left( {\bf A},{\bf V}\right) =\left\{ \theta
=\delta f\ :f\in \rm{Hom}\left( {\bf A},{\bf V}\right) \right\} $. One
can easily check that $\rm{B}^{2}({\bf A},{\bf V})$ is a linear
subspace of $\rm{Z}_{BL}^{2}\left( {\bf A},{\bf V}\right) $. We
define the {set} $\rm {H}_{BL}^{2}\left( {\bf A},{\bf V}%
\right) $ as the quotient space $\rm{Z}_{BL}^{2}\left( {\bf A},%
{\bf V}\right) \big/\rm{B}^{2}\left( {\bf A},{\bf V}\right) 
$. The equivalence class of $\theta \in \rm{Z}_{BL}^{2}\left( {\bf A%
},{\bf V}\right) $ is denoted by $\left[ \theta \right] \in 
\rm {H}_{BL}^{2}\left( {\bf A},{\bf V}\right) $.

\bigskip

Let $\rm{Aut}\left( {\bf A}\right) $ be the automorphism group of the binary
Lie algebra ${\bf A}$ and let $\phi \in \rm{Aut}\left( {\bf A}\right) $.
For $\theta \in \rm{Z}_{BL}^{2}\left( {\bf A},{\bf V}\right) $
define $\phi \theta \left( x,y\right) =\theta \left( \phi \left( x\right)
,\phi \left( y\right) \right) $. Now $\phi \theta \in \rm{Z}%
_{BL}^{2}\left( {\bf A},{\bf V}\right), $ so $\rm{Aut}\left( {\bf A}%
\right) $ acts on $\rm{Z}_{BL}^{2}\left( {\bf A},{\bf V}\right) 
$. It is easy to verify that $\rm{B}^{2}\left( {\bf A},{\bf V}%
\right) $ is invariant under the action of $\rm{Aut}\left( {\bf A}\right) $
and so $\rm{Aut}\left( {\bf A}\right) $ acts on $\rm {H}%
_{BL}^{2}\left( {\bf A},{\bf V}\right) $.

\bigskip

Let ${\bf A}$ be a binary Lie algebra of dimension $m<n$ over $\mathbb{F}.$
Let ${\bf V}
$ be an $\mathbb{F}$-vector space of dimension $n-m$. For every skew-symmetric
bilinear map $\theta :{\bf A}\times {\bf A}\longrightarrow {\bf V%
}$ define on the linear space ${\bf A}_{\theta }:={\bf A}\oplus 
{\bf V}$ the bilinear product \textquotedblleft\ $\left[ -,-\right] _{%
{\bf A}_{\theta }}$" by $\left[ x+x^{\prime },y+y^{\prime }\right] _{%
{\bf A}_{\theta }}=\left[ x,y\right] +\theta \left( x,y\right) $ for all 
$x,y\in {\bf A},x^{\prime },y^{\prime }\in {\bf V}$. It is easy
to see that the algebra ${\bf A}_{\theta }$ is a binary Lie algebra if
and only if $\theta \in \rm{Z}_{BL}^{2}\left( {\bf A},{\bf V}%
\right) $. It is also clear that if ${\bf A}$\ is nilpotent, then so is $%
{\bf A}_{\theta }$. If $\theta \in \rm{Z}_{BL}^{2}\left( {\bf A}%
,{\bf V}\right) $, we call ${\bf A}_{\theta }$ an $(n-m)$-{\it %
dimensional central extension} of ${\bf A}$ by ${\bf V}$. We
also call   $\theta ^{\bot }=\left\{ x\in {\bf A}:\theta \left(
x,{\bf A}\right) =0\right\} $ the {\it annihilator} of $\theta $.

\bigskip

We recall that the {\it annihilator} of an  algebra ${\bf A}$
is defined as the ideal 
$\rm{Ann}\left( {\bf A}\right) =\left\{ x\in {\bf A}:\left[ x,{\bf A}%
\right] =0\right\}.$
It is easy to verify that 
\begin{equation*}
\rm{Ann}\left( {\bf A}_{\theta }\right) =\left( \theta ^{\bot }\cap \rm{Ann}\left( 
{\bf A}\right) \right) \oplus {\bf V}.
\end{equation*}

\medskip

As in \cite[Lemma 5]{Ann}, we can also prove that every binary Lie algebra\ of
dimension $n$ with $\rm{Ann}\left( {\bf A}\right) \neq 0$ can be expressed in
the form ${\bf A}_{\theta }$ for a $m$-dimensional binary Lie
algebra ${\bf A}$, where $m<n$, and a vector space ${\bf V}$ of dimension $%
n-m$ (here $\theta \in \rm{Z}_{BL}^{2}\left( {\bf A},{\bf V}\right) 
$).

\bigskip

To solve the isomorphism problem we need to study the
action of $\rm{Aut}\left( {\bf A}\right) $ on $\rm {H}_{BL}^{2}\left( 
{\bf A},{\bf V}\right) $. To do that, let us fix $e_{1},\ldots
,e_{s} $ a basis of ${\bf V}$, and $\theta \in \rm{Z}%
_{BL}^{2}\left( {\bf A},{\bf V}\right) $. Then $\theta $ can be
uniquely written as $\theta \left( x,y\right) =\underset{i=1}{\overset{s}{%
\sum }}\theta _{i}\left( x,y\right) e_{i}$, where $\theta _{i}\in \rm{Z}%
_{BL}^{2}\left( {\bf A},\mathbb{F}\right) $. Moreover, $\theta ^{\bot
}=\theta^{\bot} _{1}\cap \theta^{\bot} _{2}\cdots \cap \theta^{\bot} _{s}$. Further, $\theta \in 
\rm{B}^{2}\left( {\bf A},{\bf V}\right) $\ if and only if every $%
\theta _{i}\in \rm{B}^{2}\left( {\bf A},\mathbb{F}\right) $.

\bigskip

Given a binary Lie algebra ${\bf A}$, if ${\bf A}={\bf B}\oplus 
\mathbb{F}x$ is a direct sum of two ideals, then $\mathbb{F}x$ is  an 
{\it annihilator component} of ${\bf A}$. It is not difficult to prove,
(see \cite[Lemma 13]{Ann}), that given a binary Lie algebra ${\bf A}%
_{\theta }$, if we write  $\theta \left( x,y\right) =\underset{i=1}{%
\overset{s}{\sum }}$ $\theta _{i}\left( x,y\right) e_{i}\in \rm{Z}%
_{BL}^{2}\left( {\bf A},{\bf V}\right) $ and   $\theta ^{\bot
}\cap \rm{Ann}\left( {\bf A}\right) =0$, then ${\bf A}_{\theta }$ has an
annihilator component if and only if $\left[ \theta _{1}\right] ,\left[
\theta _{2}\right] ,\ldots ,\left[ \theta _{s}\right] $ are linearly
dependent in $\rm {H}_{BL}^{2}\left( {\bf A},\mathbb{F}\right) $.

\bigskip

Let ${\bf A}$ be a binary Lie algebra over   $%
\mathbb{F}$ and let ${\bf V}$ be a vector space over  ${%
\mathbb{F}}$. The {\it Grassmannian} $G_{k}\left( {\bf V}\right) $ is
the set of all $k$-dimensional linear subspaces of ${\bf V}$. Let $%
G_{s}\left( \rm {H}_{BL}^{2}\left( {\bf A},\mathbb{F}\right) \right) 
$ be the Grassmannian of subspaces of dimension $s$ in $\rm {H}%
_{BL}^{2}\left( {\bf A},\mathbb{F}\right) $. There is a natural action
of $\rm{Aut}\left( {\bf A}\right) $ on $G_{s}\left( \rm {H}%
_{BL}^{2}\left( {\bf A},\mathbb{F}\right) \right) $. Let $\phi \in
\rm{Aut}\left( {\bf A}\right) $. For ${\bf W}=\left\langle \left[ \theta
_{1}\right] ,\left[ \theta _{2}\right] ,\ldots,\left[ \theta _{s}\right]
\right\rangle \in G_{s}\left( \rm {H}_{BL}^{2}\left( {\bf A},\mathbb{%
F}\right) \right) $ define $\phi {\bf W}=\left\langle \left[ \phi \theta
_{1}\right] ,\left[ \phi \theta _{2}\right] ,\ldots,\left[ \phi \theta _{s}%
\right] \right\rangle $. Then $\phi {\bf W}\in G_{s}\left( \rm {H}%
_{BL}^{2}\left( {\bf A},\mathbb{F}\right) \right) $. We denote the orbit
of ${\bf W}\in G_{s}\left( \rm {H}_{BL}^{2}\left( {\bf A},%
\mathbb{F}\right) \right) $ under the action of $\rm{Aut}\left( {\bf A}%
\right) $ by $\rm{Orb}\left( {\bf W}\right) $. Let 
\begin{equation*}
{\bf W}_{1}=\left\langle \left[ \theta _{1}\right] ,\ldots,\left[ \theta _{s}\right] \right\rangle,
{\bf W}_{2}=\left\langle \left[ \vartheta _{1}\right], \ldots,\left[ \vartheta _{s}\right] \right\rangle \in G_{s}\left( \rm {H}%
_{BL}^{2}\left( {\bf A},\mathbb{F}\right) \right).
\end{equation*}%
If ${\bf W}_{1}={\bf W}_{2}$, then $%
\underset{i=1}{\overset{s}{\cap }}\theta _{i}^{\bot }\cap \rm{Ann}\left( {\bf 
A}\right) =\underset{i=1}{\overset{s}{\cap }}\vartheta _{i}^{\bot }\cap
\rm{Ann}\left( {\bf A}\right) .$ 
So 

$$
{\bf T}_{s}\left( {\bf A}\right) =\left\{ \left\langle %
\left[ \theta _{1}\right]  , \ldots ,\left[ \theta _{s}%
\right] \right\rangle \in G_{s}\left( \rm {H}_{BL}^{2}\left( {\bf A},%
\mathbb{F}\right) \right) :\underset{i=1}{\overset{s}{\cap }}\theta
_{i}^{\bot }\cap \rm{Ann}\left( {\bf A}\right) =0\right\},$$
which is stable under the action of $\rm{Aut}\left( {\bf A}\right) $.

\medskip

Now, let ${\bf V}$ be an $s$-dimensional linear space and let us denote
by ${\bf E}\left( {\bf A},{\bf V}\right) $ the set of all binary
Lie algebras without annihilator components which are $s$-dimensional
central extensions of ${\bf A}$ by ${\bf V}$ and have $s$%
-dimensional annihilator. Let  

\begin{equation*}
{\bf E}\left( {\bf A},{\bf V}\right) = 
\left\{ {\bf A}
_{\theta }:\theta \left( x,y\right) =\underset{i=1}{\overset{s}{\sum }}%
\theta _{i}\left( x,y\right) e_{i},  \left\langle \left[ \theta _{1}\right] , \ldots,%
\left[ \theta _{s}\right] \right\rangle \in {\bf T}_{s}\left( {\bf A}%
\right) \right\} .
\end{equation*}%

\begin{lem}
Let ${\bf A}_{\theta },{\bf A}_{\vartheta }\in $ ${\bf E}\left( 
{\bf A},{\bf V}\right) $. Suppose that $\theta \left( x,y\right) =%
\underset{i=1}{\overset{s}{\sum }}\theta _{i}\left( x,y\right) e_{i}$ and $%
\vartheta \left( x,y\right) =\underset{i=1}{\overset{s}{\sum }}\vartheta
_{i}\left( x,y\right) e_{i}$. Then the binary Lie algebras ${\bf A}%
_{\theta }$ and ${\bf A}_{\vartheta }$ are isomorphic if and only if 
$$
{\rm{Orb}}\left\langle \left[ \theta _{1}\right] ,\left[ \theta _{2}%
\right] ,\ldots ,\left[ \theta _{s}\right] \right\rangle ={\rm{Orb}}%
\left\langle \left[ \vartheta _{1}\right] ,\left[ \vartheta _{2}\right] ,\ldots,%
\left[ \vartheta _{s}\right] \right\rangle .$$

\begin{proof}
The proof is  similar to   \cite[Lemma 17]{Ann}%
.
\end{proof}
\end{lem}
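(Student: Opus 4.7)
The plan is to adapt the Skjelbred--Sund argument (cited in the excerpt as \cite[Lemma 17]{Ann}) to the binary Lie setting, and prove both implications separately. Throughout, I use the key structural observation that for any $\mathbf{A}_\theta \in \mathbf{E}(\mathbf{A},\mathbf{V})$, the identity $\mathrm{Ann}(\mathbf{A}_\theta) = (\theta^\bot \cap \mathrm{Ann}(\mathbf{A})) \oplus \mathbf{V}$ combined with the defining condition $\bigcap_{i=1}^s \theta_i^\bot \cap \mathrm{Ann}(\mathbf{A}) = 0$ forces $\mathrm{Ann}(\mathbf{A}_\theta) = \mathbf{V}$, and similarly $\mathrm{Ann}(\mathbf{A}_\vartheta) = \mathbf{V}$.

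For the \emph{if} direction, suppose $\mathrm{Orb}\langle [\theta_1],\ldots,[\theta_s]\rangle = \mathrm{Orb}\langle [\vartheta_1],\ldots,[\vartheta_s]\rangle$. Then there exist $\phi \in \mathrm{Aut}(\mathbf{A})$, an invertible matrix $(a_{ij}) \in \mathrm{GL}_s(\mathbb{F})$, and linear maps $f_i \in \mathrm{Hom}(\mathbf{A},\mathbb{F})$ such that $\phi\theta_i = \sum_{j=1}^s a_{ij}\vartheta_j + \delta f_i$ for all $i$. I would define $\Phi\colon \mathbf{A}_\theta \to \mathbf{A}_\vartheta$ by $\Phi(x) = \phi(x) + \sum_{i=1}^s f_i(x)e_i$ for $x \in \mathbf{A}$ and $\Phi(e_i) = \sum_{j=1}^s a_{ij}e_j$, extended linearly. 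A direct calculation, using the cocycle identity $\phi\theta_i(x,y) = \theta_i(\phi(x),\phi(y))$ and the definition of $\delta f_i$, shows $\Phi$ preserves brackets, and $\Phi$ is bijective because $\phi$ is an automorphism of $\mathbf{A}$ and $(a_{ij})$ is invertible on $\mathbf{V}$.

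For the \emph{only if} direction, suppose $\Phi\colon \mathbf{A}_\theta \to \mathbf{A}_\vartheta$ is an isomorphism. Since isomorphisms preserve annihilators and both coincide with $\mathbf{V}$, I obtain $\Phi(\mathbf{V}) = \mathbf{V}$, so $\Phi$ descends to an isomorphism of quotients $\bar\Phi\colon \mathbf{A}_\theta/\mathbf{V} \to \mathbf{A}_\vartheta/\mathbf{V}$; identifying both quotients canonically with $\mathbf{A}$ yields an automorphism $\phi \in \mathrm{Aut}(\mathbf{A})$. Decomposing $\Phi(x) = \phi(x) + f(x)$ for a linear $f\colon \mathbf{A} \to \mathbf{V}$ and writing $\Phi|_{\mathbf{V}}$ as a matrix $(a_{ij})$ in the basis $e_1,\ldots,e_s$, I compare $\Phi[x,y]_{\mathbf{A}_\theta}$ with $[\Phi(x),\Phi(y)]_{\mathbf{A}_\vartheta}$; equating $\mathbf{V}$-components component by component gives $\phi\theta_i = \sum_{j=1}^s a_{ij}\vartheta_j + \delta f_i$, where $f_i$ is the $i$-th coordinate of $f$. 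Passing to cohomology classes, this says $\langle [\phi\theta_1],\ldots,[\phi\theta_s]\rangle = \langle [\vartheta_1],\ldots,[\vartheta_s]\rangle$, whence the two orbits agree.

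The main obstacle I expect is not computational but conceptual: one must be sure the argument uses only the binary Lie identity (\ref{BL id}) via the defining property (\ref{property}) of $\mathrm{Z}_{BL}^2$. The cocycle-style manipulations and the checking that $\Phi$ built above actually lands in $\mathbf{A}_\vartheta$ (rather than merely in some central extension) are routine once one verifies that the action of $\mathrm{Aut}(\mathbf{A})$ indeed preserves $\mathrm{Z}_{BL}^2(\mathbf{A},\mathbf{V})$ and $\mathrm{B}^2(\mathbf{A},\mathbf{V})$, which was already observed in the preceding discussion. Hence the proof reduces, as claimed, to the same formal mechanism as \cite[Lemma 17]{Ann}, with ``binary Lie'' replacing ``Malcev'' throughout.
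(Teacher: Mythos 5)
Your proposal is correct and follows precisely the Skjelbred--Sund-type mechanism that the paper invokes by citing \cite[Lemma 17]{Ann} (the paper gives no further details, so your reconstruction matches its intended proof: annihilator equals $\mathbf{V}$ via the $\mathbf{T}_s$-condition, isomorphisms preserve $\mathbf{V}$, pass to the quotient, and translate the resulting cocycle relation into equality of orbits of subspaces). One bookkeeping remark only: with the relation $\phi\theta_i=\sum_j a_{ij}\vartheta_j+\delta f_i$ exactly as you wrote it, the map you construct is most naturally an isomorphism $\mathbf{A}_\vartheta\to\mathbf{A}_\theta$ rather than $\mathbf{A}_\theta\to\mathbf{A}_\vartheta$ (equivalently, replace $(a_{ij})$ by the appropriate transpose/inverse), which is a harmless adjustment.
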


Hence, there exists a one-to-one correspondence between the set of $%
\rm{Aut}\left( {\bf A}\right) $-orbits on ${\bf T}_{s}\left( {\bf A}%
\right) $ and the set of isomorphism classes of ${\bf E}\left( {\bf A%
},{\bf V}\right) $. Consequently we have a procedure that,
given the (nilpotent) binary Lie algebras ${\bf A}^{^{\prime }}$ of
dimension $n-s$,  allows us to construct all of the (nilpotent) binary Lie algebras $%
{\bf A}$ of dimension $n$ with no annihilator components and with $s$%
-dimensional annihilator. This procedure is the following:

\begin{enumerate}
\item For a given (nilpotent) binary Lie algebra ${\bf A}^{^{\prime }}$
of dimension $n-s$, determine ${\bf T}_{s}({\bf A}^{^{\prime }})$
and $\rm{Aut}({\bf A}^{^{\prime }})$.

\item Determine the set of $\rm{Aut}({\bf A}^{^{\prime }})$-orbits on $%
{\bf T}_{s}({\bf A}^{^{\prime }})$.

\item For each orbit, construct the binary Lie algebra corresponding to a
representative of it.
\end{enumerate}

The above  method gives all (Malcev and non-Malcev) binary Lie
algebras. But we also are interested in developing this method in such a way
that it only gives non-Malcev binary Lie algebras. Clearly, every central
extension of a non-Malcev binary Lie algebra is non-Malcev. So, we only have
to study the central extensions of Malcev algebras. Let ${\bf M}$ be
a Malcev algebra and $\theta \in \rm{Z}_{BL}^{2}\left( {\bf M},%
\mathbb{F}\right) $. Then ${\bf M}_{\theta }$ is a Malcev algebra if and
only if 

$$\begin{array}{rcl}
 \theta \left( \left[ w,y\right] ,\left[ x,z\right] \right) &=&\theta \left( %
\left[ \left[ w,x\right] ,y\right] ,z\right) +\theta \left( \left[ \left[ x,y%
\right] ,z\right] ,w\right) +\\
&&\theta \left( \left[ \left[ y,z\right] ,w\right]
,x\right) +\theta \left( \left[ \left[ z,w\right] ,x\right] ,y\right), 
\end{array}$$

for all $x,y,z,w\in {\bf M}$. Define a subspace $\rm{Z}%
_{M}^{2}\left( {\bf M},\mathbb{F}\right) $ of $\rm{Z}%
_{BL}^{2}\left( {\bf M},\mathbb{F}\right) $ by%

$$ \rm{Z}_{M}^{2}\left( {\bf M},\mathbb{F}\right) = \left\{ 
\theta \in \rm{Z}_{BL}^{2}\left( {\bf M},\mathbb{F}\right) \left|
\begin{array}{l}\theta
\left( \left[ w,y\right] ,\left[ x,z\right] \right) =\\ 
\theta \left( \left[\left[ w,x\right] ,y\right] ,z\right) +  \theta \left( \left[ \left[ x,y\right],z\right] ,w\right) +\\ 
\theta \left( \left[ \left[ y,z\right] ,w\right] ,x\right) +\theta \left(  \left[ \left[ z,w\right] ,x\right] ,y\right)
\end{array}
\right\}\right..$$

Define $\rm {H}_{M}^{2}\left( {\bf M},\mathbb{F}\right) =%
\rm{Z}_{M}^{2}\left( {\bf M},\mathbb{F}\right) \big/\rm{B}%
^{2}\left( {\bf M},\mathbb{F}\right) $. Therefore, $\rm {H}%
_{M}^{2}\left( {\bf M},\mathbb{F}\right) $ is a subspace of $%
\rm {H}_{BL}^{2}\left( {\bf M},\mathbb{F}\right) $. Define 
\begin{eqnarray*}
{\bf R}_{s}\left( {\bf M}\right)  &=&\left\{ {\bf W}\in {\bf T}_{s}\left( {\bf M}\right) :{\bf W}\in G_{s}\left( \rm {H}%
_{M}^{2}\left( {\bf M},\mathbb{F}\right) \right) \right\},  \\
{\bf U}_{s}\left( {\bf M}\right)  &=&\left\{ {\bf W}\in {\bf T}_{s}\left( {\bf M}\right) :{\bf W}\notin G_{s}\left( \rm {H}%
_{M}^{2}\left( {\bf M},\mathbb{F}\right) \right) \right\}.
\end{eqnarray*}%
Then ${\bf T}_{s}\left( {\bf M}\right) ={\bf R}_{s}\left( 
{\bf M}\right) $ $\mathbin{\mathaccent\cdot\cup}$ ${\bf U}_{s}\left( 
{\bf M}\right) $. The sets ${\bf R}_{s}\left( {\bf M}\right) $
and ${\bf U}_{s}\left( {\bf M}\right) $ are stable under the action
of $\rm{Aut}\left( {\bf M}\right) $. Thus the binary Lie algebras
corresponding to the representatives of $\rm{Aut}\left( {\bf M}\right) $%
-orbits on ${\bf R}_{s}\left( {\bf M}\right) $ are Malcev algebras
while those corresponding to the representatives of $\rm{Aut}\left( {\bf M}%
\right) $-orbits on ${\bf U}_{s}\left( {\bf M}\right) $ are not. Hence, given those binary Lie algebras ${\bf A}%
^{^{\prime }}$ of dimension $n-s$, we may construct all non-Malcev algebras $%
{\bf A}$ of dimension $n$ with $s$-dimensional annihilator which have no
annihilator components,  in the following way:

\begin{enumerate}
\item For a given binary Lie algebra ${\bf A}^{^{\prime }}$ of dimension 
$n-s$, if ${\bf A}^{\prime }$ is non-Malcev then apply the procedure described above.

\item Otherwise, do the following:

\begin{enumerate}
\item Determine ${\bf U}_{s}\left( {\bf A}^{\prime }\right) $ and $%
\rm{Aut}({\bf A}^{^{\prime }})$.

\item Determine the set of $\rm{Aut}({\bf A}^{\prime })$-orbits on ${\bf U%
}_{s}\left( {\bf A}^{\prime }\right) $.

\item For each orbit, construct the binary Lie algebra corresponding to a
representative of it.
\end{enumerate}
\end{enumerate}

\medskip

Finally, let us introduce   notation. Let ${\bf A}$ be a binary
Lie algebra algebra with  basis $e_{1},e_{2},\ldots,e_{n}$. By $\Delta
_{ij}$\ we  denote the skew-symmetric bilinear form 
\begin{equation*}
\Delta _{ij}:{\bf A}\times {\bf A}\longrightarrow \mathbb{F}
\end{equation*}%
with $\Delta _{ij}\left( e_{i},e_{j}\right) =-\Delta _{ij}\left(
e_{j},e_{i}\right) =1$ and $\Delta _{ij}\left( e_{l},e_{m}\right) =0$ if $%
\left\{ i,j\right\} \neq \left\{ l,m\right\} $. Then  $\left\{ \Delta
_{ij}:1\leq i<j\leq n\right\} $ is a basis for the linear space of
skew-symmetric bilinear forms on ${\bf A}$. Every $\theta \in 
\rm{Z}_{BL}^{2}\left( {\bf A},\mathbb{F}\right) $ can be uniquely
written as $\theta =\underset{1\leq i<j\leq n}{\sum }c_{ij}\Delta _{ij}$%
, where $c_{ij}\in \mathbb{F}$. Further, let $\theta =\underset{1\leq
i<j\leq n}{\sum }c_{ij}\Delta _{ij}$ be a skew-symmetric bilinear form on ${\bf A}$%
. Then $\theta \in \rm{Z}_{BL}^{2}\left( {\bf A},\mathbb{F}\right) $
if and only if the $c_{ij}$'s satisfy  property $\left( \ref{property}%
\right).$  
We can decide this by computer. Note
that  property $\left( \ref{property}\right) $ is not linear in $x,y$; it
is better to linearize it. For that we have the following lemma.

\begin{lem}
Let ${\bf A}$ be a binary Lie algebra and $\theta \in \rm{Z}%
_{BL}^{2}\left( {\bf A},\mathbb{F}\right) $. Then%
\begin{equation}
\psi _{\theta }(\left[ x,y\right] ,z,t)+\psi _{\theta }(\left[ x,t\right]
,z,y)+\psi _{\theta }(\left[ z,y\right] ,x,t)+\psi _{\theta }(\left[ z,t%
\right] ,x,y)=0,  \label{lin coc}
\end{equation}%
where $\psi _{\theta }\left( x,y,z\right) :=\theta \left( \left[ x,y\right]
,z\right) +\theta \left( \left[ y,z\right] ,x\right) +\theta \left( \left[
z,x\right] ,y\right) .$
\end{lem}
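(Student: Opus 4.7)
The plan is to recast the cocycle condition (\ref{property}) in terms of the trilinear form $\psi_\theta$ and then obtain (\ref{lin coc}) by a single double polarization $x\mapsto x+\lambda z,\ y\mapsto y+\mu t$.

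First I would rewrite the hypothesis $\theta\in\mathrm{Z}_{BL}^{2}({\bf A},\mathbb{F})$ as the identity $\psi_\theta([x,y],x,y)=0$ for all $x,y\in{\bf A}$. Expanding the definition of $\psi_\theta$,
\[
\psi_\theta([x,y],x,y)=\theta([[x,y],x],y)+\theta([x,y],[x,y])+\theta([y,[x,y]],x).
\]
The middle summand vanishes because $\theta$ is a skew-symmetric bilinear form, while the third summand equals $-\theta([[x,y],y],x)$ by the anticommutativity of the bracket together with the bilinearity and skew-symmetry of $\theta$. Hence $\psi_\theta([x,y],x,y)=\theta([[x,y],x],y)-\theta([[x,y],y],x)$, which vanishes precisely when (\ref{property}) holds.

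Next I would substitute $x\mapsto x+\lambda z$ and $y\mapsto y+\mu t$ into the reformulated identity and treat the result as a polynomial identity in the formal parameters $\lambda,\mu$. By the bilinearity of the bracket on ${\bf A}$ and the trilinearity of $\psi_\theta$ in its three slots, this polynomial has bidegree at most $(2,2)$ in $(\lambda,\mu)$, and every coefficient must vanish. In the first slot of $\psi_\theta$ one expands $[x+\lambda z,y+\mu t]=[x,y]+\lambda[z,y]+\mu[x,t]+\lambda\mu[z,t]$; the second and third slots expand as $x+\lambda z$ and $y+\mu t$ respectively. Reading off the coefficient of $\lambda\mu$ picks out the four contributions
\[
\psi_\theta([x,y],z,t)+\psi_\theta([z,y],x,t)+\psi_\theta([x,t],z,y)+\psi_\theta([z,t],x,y)=0,
\]
which is exactly (\ref{lin coc}) up to the order of summands.

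There is no serious obstacle here: the argument is a routine polarization, and the hypothesis $\mathrm{char}(\mathbb{F})\neq 2$ is not invoked because we are only extracting coefficients of monomials in $\lambda,\mu$ rather than dividing by integers. The only bookkeeping is to verify that the $\lambda\mu$-coefficient in the expansion is precisely the sum of the four terms displayed; this is immediate from distributing trilinearity over the three slots of $\psi_\theta$, since to obtain a single $\lambda$ and a single $\mu$ one must choose the $\lambda^{0}\mu^{0}$ part in one slot, a $\lambda^{1}$ factor in one slot, and a $\mu^{1}$ factor in the remaining slot (or, in the first slot alone, pick the $\lambda\mu$ coefficient $[z,t]$ together with $x$ and $y$).
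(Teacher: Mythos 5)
Your proof is correct, but it takes a genuinely different route from the paper's. The paper argues through the central extension $\mathbf{A}_{\theta}$: since $\theta\in\mathrm{Z}_{BL}^{2}(\mathbf{A},\mathbb{F})$ the algebra $\mathbf{A}_{\theta}$ is a binary Lie algebra, its Jacobian on elements of $\mathbf{A}$ splits as $\mathcal{J}_{\mathbf{A}_{\theta}}=\mathcal{J}+\psi_{\theta}$, and subtracting the linearized binary Lie identity (\ref{BL.id.lin}) for $\mathbf{A}$ from the same identity for $\mathbf{A}_{\theta}$ leaves exactly (\ref{lin coc}). You instead rewrite the cocycle condition (\ref{property}) as $\psi_{\theta}([x,y],x,y)=0$ and polarize that scalar identity directly; your identification of the four contributions to the $\lambda\mu$-coefficient is correct and gives precisely the stated sum. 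Your argument is more self-contained --- it never uses the binary Lie identity on $\mathbf{A}$ itself, nor the cited linearization (\ref{BL.id.lin}) --- while the paper's is shorter because it outsources the polarization to the already-linearized identity. Two small points deserve care in your write-up. First, the identity $F(x+\lambda z,\,y+\mu t)=0$ is a priori known only for $\lambda,\mu$ ranging over $\mathbb{F}$, not as a formal polynomial identity; extracting the $\lambda\mu$-coefficient therefore needs either $|\mathbb{F}|\geq 3$ (which does hold here, since $\mathrm{char}\,\mathbb{F}\neq 2$ and the bidegree is $(2,2)$) or, more cleanly, the difference form of polarization $G(x+z,y)-G(x,y)-G(z,y)$ followed by the analogous step in $y$, which works over any field. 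Second, the vanishing of the middle summand $\theta([x,y],[x,y])$ uses that a skew-symmetric form is alternating, which is again where $\mathrm{char}\,\mathbb{F}\neq 2$ quietly enters; so your closing remark that the characteristic hypothesis is never invoked is slightly overstated, though harmless in context.
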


\begin{proof}
Let $\theta \in \rm{Z}_{BL}^{2}\left( {\bf A},\mathbb{F}\right) $.
Then ${\bf A}_{\theta }$ is a binary Lie algebra. We denote the Jacobian
of elements $x,y,z$ in ${\bf A}_{\theta }$ by $\mathcal{J}_{{\bf A}%
_{\theta }}(x,y,z)$. Now consider   $x,y,z,t\in {\bf A}.$ %
\begin{eqnarray*}
\mathcal{J}_{{\bf A}_{\theta }}(\left[ x,y\right] ,z,t) &=&\mathcal{J}(%
\left[ x,y\right] ,z,t)+\psi _{\theta }(\left[ x,y\right] ,z,t); \\
\mathcal{J}_{{\bf A}_{\theta }}(\left[ x,t\right] ,z,y) &=&\mathcal{J}(%
\left[ x,t\right] ,z,y)+\psi _{\theta }(\left[ x,t\right] ,z,y); \\
\mathcal{J}_{{\bf A}_{\theta }}(\left[ z,y\right] ,x,t) &=&\mathcal{J}(%
\left[ z,y\right] ,x,t)+\psi _{\theta }(\left[ z,y\right] ,x,t); \\
\mathcal{J}_{{\bf A}_{\theta }}(\left[ z,t\right] ,x,y) &=&\mathcal{J}(%
\left[ z,t\right] ,x,y)+\psi _{\theta }(\left[ z,t\right] ,x,y).
\end{eqnarray*}%
By the identity $\left( \ref{BL.id.lin}\right) $,  
\begin{eqnarray*}
\mathcal{J}_{{\bf A}_{\theta }}(\left[ x,y\right] ,z,t)&+&\mathcal{J}_{%
{\bf A}_{\theta }}(\left[ x,t\right] ,z,y)+ \\ 
\mathcal{J}_{{\bf A}_{\theta }}(\left[ z,y\right] ,x,t)&+&\mathcal{J}_{{\bf A}_{\theta }}(%
\left[ z,t\right] ,x,y) =0;
\end{eqnarray*}
\begin{eqnarray*}
\mathcal{J}(\left[ x,y\right] ,z,t)+\mathcal{J}(\left[ x,t\right] ,z,y)+%
\mathcal{J}(\left[ z,y\right] ,x,t)+\mathcal{J}(\left[ z,t\right] ,x,y) &=&0;
\end{eqnarray*}%
 we deduce that%
\begin{equation*}
\psi _{\theta }(\left[ x,y\right] ,z,t)+\psi _{\theta }(\left[ x,t\right]
,z,y)+\psi _{\theta }(\left[ z,y\right] ,x,t)+\psi _{\theta }(\left[ z,t%
\right] ,x,y)=0,
\end{equation*}%
as desired.
\end{proof}

Note that $(\ref{property})$ can be obtained from $(\ref{lin coc})$ by
taking $z=x,$ $t=y$ in $(\ref{lin coc})$ since the characteristic of $%
\mathbb{F}$ is not $2.$

\subsection{\textbf{Nilpotent binary Lie algebras of dimensions at most }$  5$}
In this section the classification of nilpotent binary Lie algebras of
dimension at most  $5$ is given. Throughout the paper we use some notational
conventions: 
$$\begin{array}{lcl}
     {\bf L_{i,j}} &:&  \mbox{the $j$-th nilpotent Lie algebra of dimension $i$}; \\
     {\bf M_{i,j}} &:&  \mbox{the $j$-th nilpotent non-Lie Malcev algebra of dimension $i$};\\ 
     {\bf B_{i,j}} &:&  \mbox{the $j$-th nilpotent non-Malcev  binary Lie algebra of dimension $i$}; 
\end{array}$$
and the basis
elements of an algebra of dimension $i$  are denoted by $%
e_{1},e_{2},\ldots ,e_{i}$.

It is known from  \cite{Kuzmin} that every nilpotent binary Lie
algebra of dimension at most $4$ over ${\mathbb{F}}$ is a nilpotent Lie algebra
and thus $\rm {H}_{BL}^{2}({\bf A},\mathbb{F})=\rm {H}%
_{M}^{2}({\bf A},\mathbb{F})$ for every nilpotent binary Lie algebra $%
{\bf A}$ of dimension at most $ 3$ since otherwise we  have a $4$%
-dimensional nilpotent binary Lie algebra which is neither a Lie algebra nor a
Malcev algebra. 
\begin{thm}
\label{4-dim nilp}Every  nilpotent binary Lie algebra of
dimension   $n$ at most $4$ is isomorphic to one of the pairwise nonisomorphic
algebras in Table 1. 

{\tiny 
$$\begin{array}{|c|l|c|c|c|}

\multicolumn{5}{l}{}\\
\multicolumn{5}{c}{  \mbox{{\bf Table 1.}  {\it Nilpotent binary Lie  algebras of dimension up to 4}}} \\

\multicolumn{5}{l}{}\\

\hline
{\bf A} & \mbox{Multiplication} & \rm {H}_{M}^{2}\left( 
{\bf A},\mathbb{F}\right)  & \rm {H}_{BL}^{2}\left( {\bf A},
\mathbb{F}\right)  & \rm{Ann}\left( {\bf A}\right)  \\ 

\hline
{\bf L}_{1,1} & \mbox{--------} & 0 & \rm {H}_{M}^{2}\left( 
{\bf L}_{1,1},\mathbb{F}\right)  & {\bf L}_{1,1} \\ \hline
{\bf L}_{2,1} & \mbox{--------} & \left\langle \left[ \Delta _{12}\right]
\right\rangle  & \rm {H}_{M}^{2}\left( {\bf L}_{2,1},\mathbb{F%
}\right)  & {\bf L}_{2,1} \\ \hline
{\bf L}_{3,1} & \mbox{--------} & \left\langle \left[ \Delta _{12}\right] ,%
\left[ \Delta _{13}\right] ,\left[ \Delta _{23}\right] \right\rangle  & %
\rm {H}_{M}^{2}\left( {\bf L}_{3,1},\mathbb{F}\right)  & %
{\bf L}_{2,1} \\ \hline
{\bf L}_{3,2} & [e_{1},e_{2}]=e_{3} & \left\langle \left[ \Delta
_{13}\right] ,\left[ \Delta _{23}\right] \right\rangle  & \rm {H}%
_{M}^{2}\left( {\bf L}_{3,2},\mathbb{F}\right)  & {\bf L}%
_{2,1} \\ \hline
{\bf L}_{4,1} & \mbox{--------} & \left\langle \left[ \Delta _{12}\right] ,%
\left[ \Delta _{13}\right] ,\left[ \Delta _{14}\right] ,\left[ \Delta _{23}%
\right] ,\left[ \Delta _{24}\right] ,\left[ \Delta _{34}\right]
\right\rangle  & \rm {H}_{M}^{2}\left( {\bf L}_{4,1},\mathbb{F%
}\right)  & {\bf L}_{4,1} \\ \hline
{\bf L}_{4,2} & [e_{1},e_{2}]=e_{3} & \left\langle \left[ \Delta
_{13}\right] ,\left[ \Delta _{14}\right] ,\left[ \Delta _{23}\right] ,\left[
\Delta _{24}\right] ,\left[ \Delta _{34}\right] \right\rangle  & \rm {H}_{M}^{2}\left( {\bf L}_{4,2},\mathbb{F}\right)  & \left\langle
e_{3},e_{4}\right\rangle  \\ \hline

{\bf L}_{4,3} &  [e_{1},e_{2}]=e_{3}, & %
\left\langle \left[ \Delta _{14}\right] ,\left[ \Delta _{23}\right]
\right\rangle  & \rm {H}_{M}^{2}\left( {\bf L}_{4,3},\mathbb{F%
}\right)  & \left\langle e_{4}\right\rangle  \\ 

 & 
  [e_{1},e_{3}]=e_{4} & &   & \\ \hline

\hline
\end{array}$$ }

\end{thm}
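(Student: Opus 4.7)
The plan is to reduce this statement to the classical classification of nilpotent Lie algebras of dimension at most $4$, combined with a direct computation of the cohomology data for each algebra. The key reduction is Kuzmin's theorem \cite{Kuzmin}, recalled in the paragraph preceding the statement: every nilpotent binary Lie algebra of dimension at most $4$ over $\mathbb{F}$ is a nilpotent Lie algebra. Consequently the list of pairwise non-isomorphic algebras in Table 1 coincides with the classical list of nilpotent Lie algebras of dimensions $1,2,3,4$, namely ${\bf L}_{1,1}$, ${\bf L}_{2,1}$, ${\bf L}_{3,1}$, ${\bf L}_{3,2}$, ${\bf L}_{4,1}$, ${\bf L}_{4,2}$, ${\bf L}_{4,3}$. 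Alternatively, one may reach the same list by iterating the central extension procedure described in the previous subsection, starting from the $1$-dimensional algebra, which will also produce the $\rm{H}^2$ and annihilator data as a byproduct.

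For each algebra ${\bf A}$ in the table I would then compute $\rm{H}_{BL}^{2}({\bf A},\mathbb{F})$, $\rm{H}_{M}^{2}({\bf A},\mathbb{F})$ and $\rm{Ann}({\bf A})$. The annihilator is read off directly from the multiplication table. For $\rm{H}_{BL}^{2}$, I would write a generic skew-symmetric form as $\theta=\sum_{1\leq i<j\leq n} c_{ij}\Delta_{ij}$ and impose property (\ref{property}) via the linearized identity (\ref{lin coc}) of the preceding lemma, evaluated on all quadruples of basis vectors; this yields a finite linear system in the $c_{ij}$ whose solution set is $\rm{Z}_{BL}^{2}({\bf A},\mathbb{F})$. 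Then $\rm{B}^{2}({\bf A},\mathbb{F})$ is obtained from the coboundaries $\delta f$ for $f$ running over the dual basis of ${\bf A}$, and the quotient gives $\rm{H}_{BL}^{2}({\bf A},\mathbb{F})$. For the abelian algebras ${\bf L}_{n,1}$ the bracket vanishes, so (\ref{property}) is trivially satisfied and $\rm{B}^{2}=0$; hence $\rm{H}_{BL}^{2}$ has basis $\{[\Delta_{ij}]:1\leq i<j\leq n\}$ of cardinality $\binom{n}{2}$, in agreement with the table.

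For the equality $\rm{H}_{M}^{2}({\bf A},\mathbb{F})=\rm{H}_{BL}^{2}({\bf A},\mathbb{F})$ in dimensions at most $3$, no extra work is needed: any class in $\rm{H}_{BL}^{2}({\bf A},\mathbb{F})\setminus \rm{H}_{M}^{2}({\bf A},\mathbb{F})$ would yield a central extension ${\bf A}_\theta$ of dimension at most $4$ which is a binary Lie algebra but not a Malcev algebra, contradicting Kuzmin's theorem. For the three $4$-dimensional algebras the equality must be checked directly: one substitutes a generic element of $\rm{Z}_{BL}^{2}$ into the Malcev cocycle condition defining $\rm{Z}_{M}^{2}$ and verifies that no new constraints on the $c_{ij}$ are produced.

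The main obstacle is the explicit computation of $\rm{Z}_{BL}^{2}({\bf L}_{4,3},\mathbb{F})$ and the verification that it coincides with $\rm{Z}_{M}^{2}({\bf L}_{4,3},\mathbb{F})$, since ${\bf L}_{4,3}$ is the only algebra in the list of nilpotency class greater than $2$ and therefore produces the richest collection of Jacobian terms when the linearized identity (\ref{lin coc}) is evaluated on quadruples of basis vectors. Concretely, this amounts to solving a linear system in the six unknowns $c_{ij}$, $1\leq i<j\leq 4$, and confirming that the $2$-dimensional subspace $\langle [\Delta_{14}],[\Delta_{23}]\rangle$ modulo $\rm{B}^{2}$ persists unchanged when the full Malcev cocycle identity is imposed in place of (\ref{property}).
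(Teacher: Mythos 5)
Your proposal is correct and matches the paper's (largely implicit) argument: the paper likewise reduces to Kuzmin's theorem that every nilpotent binary Lie algebra of dimension at most $4$ is a Lie algebra, takes the classical list of nilpotent Lie algebras of dimension at most $4$, derives $\mathrm{H}_{BL}^{2}=\mathrm{H}_{M}^{2}$ in dimension at most $3$ by the same contradiction with Kuzmin, and settles the remaining cohomology and annihilator data by direct (computer-assisted) evaluation of the linearized cocycle condition, exactly as you describe. No substantive difference.
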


\begin{lem}
Let ${\bf A}$ be an $n$-dimensional nilpotent binary Lie algebra.

\begin{enumerate}
\item If $n\leq 4$, then $\rm {H}_{BL}^{2}({\bf A},\mathbb{F})=%
\rm {H}_{M}^{2}({\bf A},\mathbb{F})$ and so ${\bf U}%
_{s}\left( {\bf A}\right) =\emptyset $ for $s\geq 1$.

\item If ${\bf A}$ is non-Malcev, then $\dim \rm{Ann}({\bf A})\leq n-5$.

\item If $n=5$, then ${\bf A}$ is a Malcev algebra.
\end{enumerate}
\end{lem}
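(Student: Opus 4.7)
The plan is to establish part (1) by direct case analysis on the seven algebras of Table 1 and then to derive parts (2) and (3) as formal consequences via the central extension machinery developed just above. For part (1), Kuzmin's theorem cited just before Theorem \ref{4-dim nilp} tells us that every nilpotent binary Lie algebra ${\bf A}$ of dimension at most $4$ is in fact Lie, hence Malcev; both $\rm{H}_{BL}^{2}({\bf A},\mathbb{F})$ and $\rm{H}_{M}^{2}({\bf A},\mathbb{F})$ are therefore defined, and the inclusion $\rm{H}_{M}^{2}({\bf A},\mathbb{F}) \subseteq \rm{H}_{BL}^{2}({\bf A},\mathbb{F})$ is automatic, since the Malcev identity implies the binary Lie identity. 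For the reverse inclusion I would process each of the seven algebras of Table 1 separately: parameterize an arbitrary cocycle $\theta = \sum_{i<j} c_{ij}\Delta_{ij}$, evaluate the linearized binary Lie cocycle identity \eqref{lin coc} and the Malcev cocycle identity on basis elements, and verify that the two resulting systems of linear equations in the $c_{ij}$'s cut out the same solution space. Since each ${\bf L}_{i,j}$ has at most two nonzero basis brackets the computation is short, and its outcome is already recorded in the $\rm{H}_{BL}^{2}$ column of Table 1. Factoring out $\rm{B}^{2}({\bf A},\mathbb{F})$ yields $\rm{H}_{BL}^{2}({\bf A},\mathbb{F}) = \rm{H}_{M}^{2}({\bf A},\mathbb{F})$, and then ${\bf U}_{s}({\bf A}) = \emptyset$ for $s \geq 1$ is immediate from the definition ${\bf U}_{s}({\bf A}) = {\bf T}_{s}({\bf A}) \setminus G_{s}(\rm{H}_{M}^{2}({\bf A},\mathbb{F}))$ together with the obvious inclusion ${\bf T}_{s}({\bf A}) \subseteq G_{s}(\rm{H}_{BL}^{2}({\bf A},\mathbb{F}))$.

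For part (2), let ${\bf A}$ be a non-Malcev nilpotent binary Lie algebra of dimension $n$ and set $s = \dim \rm{Ann}({\bf A})$. Choosing any vector-space complement of $\rm{Ann}({\bf A})$ inside ${\bf A}$ exhibits ${\bf A}$ as a central extension ${\bf A}'_{\theta}$, where ${\bf A}' = {\bf A}/\rm{Ann}({\bf A})$ is a nilpotent binary Lie algebra of dimension $n-s$ and $\theta \in \rm{Z}_{BL}^{2}({\bf A}', \rm{Ann}({\bf A}))$. If $n - s \leq 4$, then ${\bf A}'$ is Lie by Kuzmin, hence Malcev; by (1) each scalar component $\theta_{i}$ of $\theta$ then lies in $\rm{Z}_{M}^{2}({\bf A}', \mathbb{F})$, so ${\bf A} = {\bf A}'_{\theta}$ would itself be Malcev --- a contradiction. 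Hence $n - s \geq 5$, i.e.\ $s \leq n - 5$. Part (3) follows at once: for $n = 5$, (2) would force $\dim \rm{Ann}({\bf A}) \leq 0$, which contradicts the fact that a nonzero nilpotent algebra has nontrivial annihilator.

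The main obstacle is the cocycle verification in (1), and in practice the only genuinely nontrivial cases are the four-dimensional filiform-type algebras ${\bf L}_{4,2}$ and ${\bf L}_{4,3}$; for the remaining entries of Table 1 the Jacobian either vanishes identically or is supported on very few basis triples, so the two cocycle conditions coincide without any work. An abstract shortcut --- ``every $5$-dimensional binary Lie extension of a four-dimensional nilpotent Lie algebra is Malcev'' --- is tantamount to part (3) itself, and so cannot be invoked to bypass the case-by-case check.
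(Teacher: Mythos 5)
Your proposal is correct and follows essentially the same route as the paper: part (1) is the computation recorded in Table 1 (which the paper simply cites), and parts (2) and (3) are derived exactly as in the paper, by viewing a non-Malcev algebra as a central extension of its quotient by the annihilator and invoking (1) to force that quotient to have dimension at least $5$, with (3) following because a nonzero nilpotent algebra has nontrivial annihilator. No substantive difference to report.
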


\begin{proof}
($1$) It follows from Table 1.

($2$) Suppose to the contrary that $\dim \rm{Ann}({\bf A})>n-5$. Then $\dim 
{\bf A}/\rm{Ann}({\bf A})\leq 4$ and so ${\bf A}/\rm{Ann}({\bf A})$ is
a Lie algebra. Further, ${\bf A}$ can be viewed
as $\left( n-4\right) $-dimensional extension of ${\bf A}/\rm{Ann}({\bf A}%
)$. Since $\dim {\bf A}/\rm{Ann}({\bf A})\leq 4$, $\rm {H}_{BL}^{2}(%
{\bf A}/\rm{Ann}({\bf A}),\mathbb{F})=\rm {H}_{M}^{2}({\bf A}%
/\rm{Ann}({\bf A}),\mathbb{F})$ and hence ${\bf U}_{n-4}\left( {\bf L}%
\right) =\emptyset $. Therefore ${\bf A}$ is a Malcev algebra, which is a
contradiction.

($3$) It follows from  ($1$). Also, since ${\bf A}$ is nilpotent, $%
\dim \rm{Ann}({\bf A})\geq 1$ and therefore, by  ($2$), ${\bf A}$ is
a Malcev algebra.
\end{proof}

\begin{thm}
\label{5-dim}Every $5$-dimensional nilpotent binary Lie algebras is a Malcev
algebra and isomorphic to one of the pairwise nonisomorphic algebras in
Table  2.

{\tiny 
$$\begin{array}{|c|c|c|c|}

\multicolumn{4}{l}{}\\
\multicolumn{4}{c}{  \mbox{{\bf Table 2.}  {\it Nilpotent binary Lie algebras of dimension 5}}} \\

\multicolumn{4}{l}{}\\

\hline
{\bf A} &  \mbox{Multiplication} & \rm {H}_{M}^{2}\left( {\bf A},\mathbb{F}\right)  & \rm {H}_{BL}^{2}\left( {\bf A}, \mathbb{F}\right)    \\ \hline

{\bf L}_{5,1} & \mbox{--------}  & \rm {H}_{BL}^{2}\left( {\bf L}_{4,1},\mathbb{F}\right) \oplus \left\langle  \begin{array}{c}
\left[ \Delta _{15}\right] ,\left[ \Delta _{25}\right] , \\ 
\left[ \Delta _{35}\right] ,\left[ \Delta _{45}\right]%
\end{array}%
\right\rangle  & \rm {H}_{M}^{2}\left( {\bf L}_{5,1},\mathbb{F%
}\right)     \\ \hline

{\bf L}_{5,2} & [e_{1},e_{2}]=e_{3} & \rm {H}_{BL}^{2}\left( 
{\bf L}_{4,2},\mathbb{F}\right) \oplus \left\langle 
\begin{array}{c}
\left[ \Delta _{15}\right] ,\left[ \Delta _{25}\right] , \\ 
\left[ \Delta _{35}\right] ,\left[ \Delta _{45}\right]%
\end{array}%
\right\rangle  & \rm {H}_{M}^{2}\left( {\bf L}_{5,2},\mathbb{F%
}\right)  

\\ \hline
{\bf L}_{5,3} & 
\begin{array}{l}
\lbrack e_{1},e_{2}]=e_{3}, \\ 
\lbrack e_{1},e_{3}]=e_{4}%
\end{array}
& \rm {H}_{BL}^{2}\left( {\bf L}_{4,3},\mathbb{F}\right) \oplus
\left\langle 
\begin{array}{c}
\left[ \Delta _{15}\right] ,\left[ \Delta _{25}\right] , \\ 
\left[ \Delta _{35}\right]%
\end{array}%
\right\rangle  & \rm {H}_{M}^{2}\left( {\bf L}_{5,3},\mathbb{F%
}\right)  \\ 
\hline

{\bf L}_{5,4} & 
\begin{array}{l}
\lbrack e_{1},e_{2}]=e_{5}, \\ 
\lbrack e_{3},e_{4}]=e_{5}%
\end{array}
& \left\langle 
\begin{array}{c}
\left[ \Delta _{13}\right] ,\left[ \Delta _{14}\right] ,\left[ \Delta _{23}%
\right] ,\left[ \Delta _{24}\right] ,\left[ \Delta _{34}\right] , \\ 
\left[ \Delta _{15}\right] ,\left[ \Delta _{25}\right] ,\left[ \Delta _{35}%
\right] ,\left[ \Delta _{45}\right]%
\end{array}%
\right\rangle  & \rm {H}_{M}^{2}\left( {\bf L}_{5,4},\mathbb{F%
}\right)  \\ \hline

{\bf L}_{5,5} & 
\begin{array}{l}
\lbrack e_{1},e_{2}]=e_{3}, \\ 
\lbrack e_{1},e_{3}]=e_{5}, \\ 
\lbrack e_{2},e_{4}]=e_{5}%
\end{array}
&  \Big\langle \left[ \Delta _{13}\right] ,\left[ \Delta _{14}\right] ,%
\left[ \Delta _{23}\right] ,\left[ \Delta _{34}\right] ,\left[ \Delta _{15}%
\right] \Big\rangle  & \rm {H}_{M}^{2}\left( {\bf L}_{5,5},%
\mathbb{F}\right)    \\ 
\hline

{\bf L}_{5,6} & 
\begin{array}{l}
\lbrack e_{1},e_{2}]=e_{3}, \\ 
\lbrack e_{1},e_{3}]=e_{4}, \\ 
\lbrack e_{1},e_{4}]=e_{5}, \\ 
\lbrack e_{2},e_{3}]=e_{5}%
\end{array}
& \Big\langle \left[ \Delta _{14}\right] ,\left[ \Delta _{15}\right] -%
\left[ \Delta _{24}\right] ,\left[ \Delta _{25}\right] -\left[ \Delta _{34}%
\right] \Big\rangle  & \rm {H}_{M}^{2}\left( {\bf L}_{5,6},%
\mathbb{F}\right)   \\ 
\hline

{\bf L}_{5,7} & 
\begin{array}{l}
\lbrack e_{1},e_{2}]=e_{3}, \\ 
\lbrack e_{1},e_{3}]=e_{4}, \\ 
\lbrack e_{1},e_{4}]=e_{5}%
\end{array}
& \Big\langle \left[ \Delta _{15}\right] ,\left[ \Delta _{23}\right] ,%
\left[ \Delta _{25}\right] -\left[ \Delta _{34}\right] \Big\rangle  & %
\rm {H}_{M}^{2}\left( {\bf L}_{5,7},\mathbb{F}\right)   \\ \hline

{\bf L}_{5,8} & 
\begin{array}{l}
\lbrack e_{1},e_{2}]=e_{4}, \\ 
\lbrack e_{1},e_{3}]=e_{5}%
\end{array}
& \left\langle 
\begin{array}{c}
\left[ \Delta _{14}\right] ,\left[ \Delta _{15}\right] ,\left[ \Delta _{23}%
\right] ,\left[ \Delta _{24}\right] , \\ 
\left[ \Delta _{34}\right] ,\left[ \Delta _{25}\right] ,\left[ \Delta _{35}%
\right]%
\end{array}%
\right\rangle  & \rm {H}_{M}^{2}\left( {\bf L}_{5,8},\mathbb{F%
}\right) \oplus \left\langle \left[ \Delta _{45}\right] \right\rangle   \\ \hline

{\bf L}_{5,9} & 
\begin{array}{l}
\lbrack e_{1},e_{2}]=e_{3}, \\ 
\lbrack e_{1},e_{3}]=e_{4}, \\ 
\lbrack e_{2},e_{3}]=e_{5}%
\end{array}
& \Big\langle \left[ \Delta _{14}\right] ,\left[ \Delta _{15}\right] +%
\left[ \Delta _{24}\right] ,\left[ \Delta _{25}\right] \Big\rangle  & %
\rm {H}_{M}^{2}\left( {\bf L}_{5,9},\mathbb{F}\right)  \\ \hline

{\bf M}_{5,1} & 
\begin{array}{l}
\left[ e_{1},e_{2}\right] =e_{3}, \\ 
\left[ e_{3},e_{4}\right] =e_{5}%
\end{array}
& \Big\langle \left[ \Delta _{13}\right] ,\left[ \Delta _{14}\right] ,%
\left[ \Delta _{23}\right] ,\left[ \Delta _{24}\right] \Big\rangle  & %
\rm {H}_{M}^{2}\left( {\bf M}_{5,1},\mathbb{F}\right) \oplus \left\langle %
\left[ \Delta _{45}\right] \right\rangle   \\ \hline
\end{array} $$%
}

$$\begin{array}{ll}
\rm{Ann}({\bf L}_{5,1})= {\bf L}_{5,1}, &
\rm{Ann}({\bf L}_{5,2})=  \left\langle e_{3},e_{4},e_{5}\right\rangle, \\
\rm{Ann}({\bf L}_{5,3})=  \left\langle e_{4},e_{5}\right\rangle,&
\rm{Ann}({\bf L}_{5,4})=  \left\langle e_{5}\right\rangle, \\ 
\rm{Ann}({\bf L}_{5,5})=  \left\langle e_{5}\right\rangle, & 
\rm{Ann}({\bf L}_{5,6})=  \left\langle e_{5}\right\rangle, \\ 
\rm{Ann}({\bf L}_{5,7})=  \left\langle e_{5}\right\rangle, &   
\rm{Ann}({\bf L}_{5,8})=  \left\langle e_{4},e_{5}\right\rangle,\\ 
\rm{Ann}({\bf L}_{5,9})=  \left\langle e_{4},e_{5}\right\rangle, &  
\rm{Ann}({\bf M}_{5,1})=  \left\langle e_{5}\right\rangle 
\end{array}$$

$${\bf U}_{1}\left( {\bf L}_{5,8} \right) \neq 0, \ {\bf U}_{1}\left( {\bf M}_{5,1} \right) \neq 0, \ {\bf U}_{1}\left( {\bf L}_{5,i} \right)=0, i \in \{1,2,3,4,5,6,7,9\}.$$

\end{thm}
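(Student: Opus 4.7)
The plan is to split the proof into three stages that exploit the machinery developed earlier in the section. The first assertion, that every $5$-dimensional nilpotent binary Lie algebra is a Malcev algebra, is nothing but part $(3)$ of the preceding lemma, so no further argument is needed for this half of the theorem. What remains is to produce the classification list in Table~2 and to verify the cohomological and annihilator data displayed there.

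For the classification proper, I would apply the central-extension procedure from Section~1.2 starting from each of the three $4$-dimensional nilpotent binary Lie algebras in Table~1. Because part $(1)$ of the preceding lemma gives $\rm{H}_{BL}^{2}({\bf A}',\mathbb{F}) = \rm{H}_{M}^{2}({\bf A}',\mathbb{F})$ for every $4$-dimensional ${\bf A}'$, we have $\mathbf{U}_{1}({\bf A}')=\emptyset$, so every $\rm{Aut}({\bf A}')$-orbit contributes a Malcev extension and nothing is lost by working inside $\rm{H}_{M}^{2}$. Concretely: for each ${\bf A}' \in \{{\bf L}_{4,1},{\bf L}_{4,2},{\bf L}_{4,3}\}$, read $\rm{H}_{M}^{2}({\bf A}',\mathbb{F})$ from Table~1, cut out $\mathbf{T}_{1}({\bf A}')$ by imposing $\theta^{\bot}\cap \rm{Ann}({\bf A}')=0$, describe $\rm{Aut}({\bf A}')$ as a matrix group in the working basis, and enumerate the $\rm{Aut}({\bf A}')$-orbits on $\mathbf{T}_{1}({\bf A}')$. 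Each orbit representative $\theta$ yields one multiplication table ${\bf A}'_{\theta}$, and the ten tables assembled this way are precisely the entries of Table~2. Pairwise non-isomorphism is automatic, since isomorphic extensions correspond to the same orbit by the lemma in Section~1.2.

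The remaining columns of Table~2 are obtained by reapplying the cocycle analysis to each listed algebra ${\bf A}$. Expanding $\theta=\sum_{i<j}c_{ij}\Delta_{ij}$ and imposing the linearized binary Lie identity~(\ref{lin coc}) gives a linear system in the $c_{ij}$ whose solution space, modulo coboundaries $\delta f$ with $f\in\rm{Hom}({\bf A},\mathbb{F})$, determines $\rm{H}_{BL}^{2}({\bf A},\mathbb{F})$; adding the linearization of~(\ref{Malcev id}) singles out the subspace $\rm{H}_{M}^{2}({\bf A},\mathbb{F})$. The annihilators are read off directly from the multiplication. Finally, the $\mathbf{U}_{1}$ data reduce to verifying, for each ${\bf A}$, whether the inclusion $\rm{H}_{M}^{2}\subseteq \rm{H}_{BL}^{2}$ is strict and whether an extra generator has trivial intersection with $\rm{Ann}({\bf A})$; inspection shows this happens only for ${\bf L}_{5,8}$ and ${\bf M}_{5,1}$, whose extra class $[\Delta_{45}]$ pairs nontrivially with the annihilator and so contributes non-Malcev extensions in dimension $6$ as needed for Theorem~\ref{main2}.

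The principal technical obstacle is the orbit enumeration under $\rm{Aut}({\bf A}')$, most noticeably for ${\bf A}'={\bf L}_{4,1}$, whose automorphism group is the full $\mathrm{GL}_{4}(\mathbb{F})$, and for ${\bf L}_{4,3}$, where reduction of a quadratic form modulo squares can a priori produce $\mathbb{F}^{\ast}/(\mathbb{F}^{\ast})^{2}$-many classes. The key observation that eliminates the latter issue is that in dimension $5$ the linear action of $\rm{Aut}({\bf A}')$ is still transitive enough on the relevant pencils in $\rm{H}_{M}^{2}({\bf A}',\mathbb{F})$ that each orbit has a representative whose parameters can be absorbed into a diagonal change of basis; this is exactly why the parameters disappear in Table~2 but return already in the $6$-dimensional classification. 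Once these orbit computations are done, everything else is routine linear algebra and matches the known classification of $5$-dimensional nilpotent Malcev algebras.
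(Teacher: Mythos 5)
Your first step is right and matches the paper: the assertion that every $5$-dimensional nilpotent binary Lie algebra is a Malcev algebra is exactly part $(3)$ of the preceding lemma, and the paper then simply imports the list from the known classifications of $5$-dimensional nilpotent Lie algebras and nilpotent non-Lie Malcev algebras (\cite{Graaf}, \cite{Ann2}) rather than re-deriving it; the cohomology columns are a direct computation with the linearized cocycle condition, as you describe. So your plan to reconstruct the list by the Skjelbred--Sund procedure is a genuinely different, self-contained route. However, as written it has a real gap.

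The gap is in the enumeration step. You propose to obtain all ten algebras of Table~2 as elements of $\mathbf{T}_{1}({\bf A}')$ for the three $4$-dimensional algebras ${\bf A}'$, i.e.\ as one-dimensional central extensions. But the correspondence set up in Section~1.2 identifies $\operatorname{Aut}({\bf A}')$-orbits on $\mathbf{T}_{s}({\bf A}')$ with extensions that have \emph{no annihilator component and annihilator of dimension exactly $s$}. Taking only $s=1$ over $4$-dimensional base algebras therefore produces only the algebras with one-dimensional annihilator and no annihilator component (namely ${\bf L}_{5,4},{\bf L}_{5,5},{\bf L}_{5,6},{\bf L}_{5,7},{\bf M}_{5,1}$). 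It cannot produce ${\bf L}_{5,8}$ and ${\bf L}_{5,9}$, whose annihilators are $\left\langle e_{4},e_{5}\right\rangle$ (two-dimensional) and which have no annihilator component: these arise only as $2$-dimensional central extensions of $3$-dimensional algebras, i.e.\ from orbits on $\mathbf{T}_{2}$ inside the Grassmannian $G_{2}\bigl(\mathrm{H}_{BL}^{2}({\bf A}',\mathbb{F})\bigr)$. Likewise the algebras with annihilator components (${\bf L}_{5,1}$, ${\bf L}_{5,2}$, ${\bf L}_{5,3}$) are not in any $\mathbf{E}({\bf A}',{\bf V})$ and must be added by the separate direct-sum step ${\bf B}\oplus\mathbb{F}x$ applied to the lower-dimensional classification. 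So your claim that ``the ten tables assembled this way are precisely the entries of Table~2'' fails; to complete the argument you must run the procedure for all $s$ with $1\le s\le 4$ over the $(5-s)$-dimensional algebras and separately append the annihilator-component cases. The rest of your outline (the cohomology and annihilator columns, and the criterion for $\mathbf{U}_{1}\neq\emptyset$ via a class outside $\mathrm{H}_{M}^{2}$ whose radical meets the annihilator trivially) is sound.
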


\subsection{\textbf{Nilpotent binary Lie algebras of dimension }$\mathbf{6}$}

\label{dim 6}In this section we give a complete classification of all $6$%
-dimensional nilpotent binary Lie algebras over $\mathbb F.$
Nilpotent  Lie algebras of dimension $6$ over $\mathbb F$ were classified in \cite{Graaf};
nilpotent non-Lie Malcev algebras were classified in \cite{Ann2}. 
Therefore we only  classify nilpotent binary Lie algebras which are
not Malcev algebras. 
Every nilpotent binary Lie algebras of dimension $5$ is a Malcev algebra.
Therefore $6$-dimensional nilpotent non-Malcev
 binary Lie algebras with annihilator components do not exist. Next
we classify $6$-dimensional nilpotent non-Malcev  binary Lie algebra without
any annihilator component. By Theorem \ref{5-dim},   for a
 $5$-dimensional nilpotent binary Lie algebra ${\bf A}$, $%
{\bf U}_{1}\left( {\bf A}\right) \neq \emptyset $ if and only if $%
{\bf A}\cong {\bf L}_{5,8}$ or ${\bf A}\cong {\bf M}_{5,1}.$

\subsubsection{The binary Lie algebras corresponding to the representatives of $%
\rm{Aut}\left( {\bf L}_{5,8}\right) $-orbits on ${\bf U}_{1}\left( 
{\bf L}_{5,8}\right) $}

The automorphism group of ${\bf L}_{5,8}$\ consists of invertible
matrices of the form %\begin{equation}
\[
\phi =%
\begin{bmatrix}
a_{11} & 0 & 0 & 0 & 0 \\ 
a_{21} & a_{22} & a_{23} & 0 & 0 \\ 
a_{31} & a_{32} & a_{33} & 0 & 0 \\ 
a_{41} & a_{42} & a_{43} & a_{11}a_{22} & a_{11}a_{23} \\ 
a_{51} & a_{52} & a_{53} & a_{11}a_{32} & a_{11}a_{33}%
\end{bmatrix}%
.
\]%
%
%
%
%
%
%
%
%
%
%
%
%
%
%
%
%
%
%
%
%
%
%
%
%
%\end{equation}%
Choose an arbitrary subspace ${\bf W}\in {\bf U}_{1}\left( {\bf L%
}_{5,8}\right) $. From Table 2, such a subspace is spanned by 
\begin{eqnarray*}
\left[ \theta \right] &=&C_{14}\left[ \Delta _{14}\right] +C_{15}\left[ \Delta
_{15}\right] +C_{23}\left[ \Delta _{23}\right] +C_{24}\left[ \Delta _{24}%
\right] + \\ && C_{25}\left[ \Delta _{25}\right] +C_{34}\left[ \Delta _{34}\right]
+C_{35}\left[ \Delta _{35}\right] +C_{45}\left[ \Delta _{45}\right]
\end{eqnarray*}
such that $C_{45}\neq 0$. Let $\phi =\big(a_{ij}\big)\in $ $\rm{Aut}\left( 
{\bf L}_{5,8}\right) $. Write 
\begin{eqnarray*}
\left[ \phi \theta \right] &=&C_{14}^{\prime }\left[ \Delta _{14}\right]
+C_{15}^{\prime }\left[ \Delta _{15}\right] + C_{23}^{\prime }\left[ \Delta
_{23}\right] +C_{24}^{\prime }\left[ \Delta _{24}\right] +\\ &&C_{25}^{\prime }%
\left[ \Delta _{25}\right] +  C_{34}^{\prime }\left[ \Delta _{34}\right]
+C_{35}^{\prime }\left[ \Delta _{35}\right] +C_{45}^{\prime }\left[ \Delta
_{45}\right] .
\end{eqnarray*}
Then%
$$\begin{array}{rcl}
C_{14}^{\prime } &=&a_{11}(C_{14}a_{11}a_{22}+C_{15}a_{11}a_{32}+C_{24}a_{21}a_{22}+C_{25}a_{21}a_{32}+\\
&&C_{34}a_{22}a_{31}+C_{35}a_{31}a_{32}-C_{45}a_{22}a_{51}+C_{45}a_{32}a_{41}),\\
C_{15}^{\prime } &=&a_{11}(
C_{14}a_{11}a_{23}+C_{15}a_{11}a_{33}+C_{24}a_{21}a_{23}+C_{25}a_{21}a_{33}+\\ 
&&C_{34}a_{31}a_{23}+C_{35}a_{31}a_{33}-C_{45}a_{23}a_{51}+C_{45}a_{41}a_{33}) ,
\\
C_{23}^{\prime }
&=&C_{23}a_{22}a_{33}-C_{23}a_{23}a_{32}+C_{24}a_{22}a_{43}-C_{24}a_{23}a_{42}+\\&&C_{25}a_{22}a_{53}- 
C_{25}a_{23}a_{52}+C_{34}a_{32}a_{43}-C_{34}a_{33}a_{42}+\\ &&C_{35}a_{32}a_{53}-C_{35}a_{33}a_{52}+C_{45}a_{42}a_{53}-C_{45}a_{43}a_{52},
\\
C_{24}^{\prime } &=&a_{11}(
C_{24}a_{22}^{2}+C_{35}a_{32}^{2}+C_{25}a_{22}a_{32}+\\&& C_{34}a_{22}a_{32}-C_{45}a_{22}a_{52}+C_{45}a_{32}a_{42}) ,
\\
C_{25}^{\prime } &=&a_{11}(
C_{24}a_{22}a_{23}+C_{25}a_{22}a_{33}+C_{34}a_{23}a_{32}+\\&& C_{35}a_{32}a_{33}-C_{45}a_{23}a_{52}+C_{45}a_{33}a_{42}) ,
\\
C_{34}^{\prime } &=&a_{11}(
C_{24}a_{22}a_{23}+C_{25}a_{23}a_{32}+C_{34}a_{22}a_{33}+ \\&& C_{35}a_{32}a_{33}-C_{45}a_{22}a_{53}+C_{45}a_{32}a_{43}) ,
\\
C_{35}^{\prime } &=&a_{11}(
C_{24}a_{23}^{2}+C_{35}a_{33}^{2}+C_{25}a_{23}a_{33}+\\&& C_{34}a_{23}a_{33}-C_{45}a_{23}a_{53}+C_{45}a_{33}a_{43}) ,
\\
C_{45}^{\prime } &=&a_{11}^{2}( a_{22}a_{33}-a_{23}a_{32}) C_{45}.
\end{array}$$
Set $\delta =C_{23}C_{45}-C_{24}C_{35}+C_{25}C_{34}$ and $\delta ^{\prime
}=C_{23}^{\prime }C_{45}^{\prime }-C_{24}^{\prime }C_{35}+C_{25}^{\prime
}C_{34}^{\prime }$. Easy computations show that $\delta ^{\prime
}=\allowbreak a_{11}^{2}\left( a_{22}a_{33}-a_{23}a_{32}\right) ^{2}\delta $%
. Thus $\mbox{Orb}\left( \left\langle \left[ \theta \right] :\delta \neq
0\right\rangle \right) \cap \mbox{Orb}\left( \left\langle \left[ \theta %
\right] :\delta =0\right\rangle \right) =\emptyset $ and hence $\rm{Aut}\left( 
{\bf L}_{5,8}\right) $ has at least two orbits on ${\bf U}_{1}\left( 
{\bf L}_{5,8}\right) $.

\begin{enumerate}[$\bullet$]
    \item \textsc{Case 1. }  $\delta \neq 0$. Let $\phi $ be the following automorphism%
\[
\phi =%
\begin{bmatrix}
1 & 0 & 0 & 0 & 0 \\ 
0 & 1 & 0 & 0 & 0 \\ 
0 & 0 & C_{45}^{-1} & 0 & 0 \\ 
-C_{45}^{-1}C_{15} & -C_{45}^{-1}C_{25} & -C_{45}^{-2}C_{35} & 1 & 0 \\ 
C_{45}^{-1}C_{14} & C_{45}^{-1}C_{24} & C_{45}^{-2}C_{34} & 0 & C_{45}^{-1}%
\end{bmatrix}%
.
\]%
Then $\phi {\bf W}=\left\langle C_{45}^{-2}\delta \left[ \Delta _{23}%
\right] +\left[ \Delta _{45}\right] \right\rangle $. Set $\alpha
=C_{45}^{-2}\delta $. Then $\alpha \neq 0$ and so we get the representatives 
${\bf W}_{\alpha }=\left\langle \alpha \left[ \Delta _{23}\right] +\left[
\Delta _{45}\right] :\alpha \in \mathbb{F}^{\ast }\right\rangle $.\ We claim
that $\mbox{Orb}\left( {\bf W}_{\alpha }\right) =\mbox{Orb}\left( 
{\bf W}_{\beta }\right) $ if and only if there is an $\lambda \in 
\mathbb{F}^{\ast }$ such that $\beta =\lambda ^{2}\alpha .$ Hence the  
number of   possible orbits among such representatives is $\left\vert 
\mathbb{F}^{\ast }/\left( \mathbb{F}^{\ast }\right) ^{2}\right\vert $. To
see this, suppose that $\mbox{Orb}\left( {\bf W}_{\alpha }\right) =%
\mbox{Orb}\left( {\bf W}_{\beta }\right) $. Then there exist $\phi =%
\big(a_{ij}\big)\in $ $\rm{Aut}\left( {\bf L}_{5,8}\right) $ and $\lambda \in \mathbb{F}%
^{\ast }$ such that $\phi \left( \beta \left[ \Delta _{23}\right] +\left[
\Delta _{45}\right] \right) =\lambda \left( \alpha \left[ \Delta _{23}\right]
+\left[ \Delta _{45}\right] \right) $. Consequently, we obtain  the
following polynomial equations:%
\begin{eqnarray*}
a_{11}\left( a_{32}a_{41}-a_{22}a_{51}\right)=0; && a_{11}\left( a_{41}a_{33}-a_{23}a_{51}\right) =0;\\ 
a_{11}\left( a_{32}a_{42}-a_{22}a_{52}\right)=0; && a_{11}\left( a_{33}a_{42}-a_{23}a_{52}\right) =0;\\ 
a_{11}\left( a_{32}a_{43}-a_{22}a_{53}\right)=0; && a_{11}\left( a_{33}a_{43}-a_{23}a_{53}\right) =0; \\
a_{11}^{2}\left( a_{22}a_{33}-a_{23}a_{32}\right)=\lambda; && a_{42}a_{53}-a_{43}a_{52}+\beta \left(a_{22}a_{33}-a_{23}a_{32}\right)
=\lambda \alpha .
\end{eqnarray*}%
Since  $\det \phi \neq 0$ if and only if $a_{11}\left(
a_{22}a_{33}-a_{23}a_{32}\right) \neq 0$, we can easily see that $%
a_{42}a_{53}-a_{43}a_{52}=0$. We obtain from the last two
equations that $\beta =a_{11}^{2}\alpha $. Conversely, suppose that $\beta
=\lambda ^{2}\alpha $ for some $\lambda \in \mathbb{F}^{\ast }$. Let $\phi $
be the diagonal matrix with the entries $\left( \lambda ,1,1,\lambda
,\lambda \right) $ in the diagonal. Then $\phi {\bf W}_{\beta
}=\left\langle \beta \left[ \Delta _{23}\right] +\lambda ^{2}\left[ \Delta
_{45}\right] \right\rangle =\left\langle \lambda ^{2}\left( \alpha \left[
\Delta _{23}\right] +\left[ \Delta _{45}\right] \right) \right\rangle =%
{\bf W}_{\alpha }.$ This completes the proof of the claim. Hence we get
the following algebras:
\[{\bf B}_{6,1}^{\alpha \neq
0}:[e_{1},e_{2}]=e_{4},[e_{1},e_{3}]=e_{5},[e_{2},e_{3}]=\alpha
e_{6},[e_{4},e_{5}]=e_{6}.
\]%
Moreover, the algebras ${\bf B}_{6,1}^{\alpha \neq 0}$ and ${\bf B}%
_{6,1}^{\beta \neq 0}$\ are isomorphic if and only if there is an $\lambda
\in \mathbb{F}^{\ast }$ such that $\beta =\lambda ^{2}\alpha $. So the
number of non-isomorphic algebras among the family ${\bf B}%
_{6,1}^{\alpha \neq 0}$ is $\left\vert \mathbb{F}^{\ast }/\left( \mathbb{F}%
^{\ast }\right) ^{2}\right\vert $.$\allowbreak $

\item \textsc{Case 2. } $\delta =0$.
Let $\phi $ be the following automorphism%
\begin{equation*}
\phi =%
\begin{bmatrix}
1 & 0 & 0 & 0 & 0 \\ 
0 & C_{45}^{-1} & 0 & 0 & 0 \\ 
0 & 0 & 1 & 0 & 0 \\ 
-C_{45}^{-1}C_{15} & -C_{45}^{-2}C_{25} & -C_{45}^{-1}C_{35} & C_{45}^{-1} & 
0 \\ 
C_{45}^{-1}C_{14} & C_{45}^{-2}C_{24} & C_{45}^{-1}C_{34} & 0 & 1%
\end{bmatrix}%
.
\end{equation*}%
Then $\phi {\bf W}=\left\langle \left[ \Delta _{45}\right] \right\rangle 
$. So we get the algebra:%
\begin{equation*}
{\bf B}^0%
_{6,1}:[e_{1},e_{2}]=e_{4},[e_{1},e_{3}]=e_{5},[e_{4},e_{5}]=e_{6}.
\end{equation*}
\end{enumerate}

\subsubsection{The binary Lie algebras corresponding to the representatives of $%
\rm{Aut}\left( {\bf M}_{5,1}\right) $-orbits on ${\bf U}_{1}\left( 
{\bf M}_{5,1}\right) $}

The automorphism group of ${\bf M}_{5,1}$\ consists of invertible
matrices of the form %\begin{equation}
\begin{equation*}
\phi =%
\begin{bmatrix}
a_{11} & a_{12} & 0 & 0 & 0 \\ 
a_{21} & a_{22} & 0 & 0 & 0 \\ 
0 & 0 & a_{11}a_{22}-a_{12}a_{21} & a_{34} & 0 \\ 
0 & 0 & 0 & a_{44} & 0 \\ 
a_{51} & a_{52} & 0 & a_{54} & a_{44}\left( a_{11}a_{22}-a_{12}a_{21}\right)%
\end{bmatrix}%
.
\end{equation*}%
%
%
%
%
%
%
%
%
%
%
%
%
%
%
%
%
%
%
%
%
%
%
%\end{equation}%
Choose an arbitrary subspace ${\bf W}\in {\bf U}_{1}\left( {\bf M%
}_{5,1}\right) $. From Table 2, such a subspace is spanned by 
\begin{equation*}
\left[ \theta \right] =C_{13}\left[ \Delta _{13}\right] +C_{14}\left[ \Delta
_{14}\right] +C_{23}\left[ \Delta _{23}\right] +C_{24}\left[ \Delta _{24}%
\right] +C_{45}\left[ \Delta _{45}\right] 
\end{equation*}%
where $C_{45}\neq 0$. Let $\phi =\big(a_{ij}\big)\in $ $\rm{Aut}\left( 
{\bf M}_{5,1}\right) $. Write 
\begin{equation*}
\left[ \phi \theta \right] =C_{13}^{\prime }\left[ \Delta _{13}\right]
+C_{14}^{\prime }\left[ \Delta _{14}\right] +C_{23}^{\prime }\left[ \Delta
_{23}\right] +C_{24}^{\prime }\left[ \Delta _{24}\right] +C_{45}^{\prime }%
\left[ \Delta _{45}\right] . 
\end{equation*}%
Then%
\begin{eqnarray*}
C_{13}^{\prime } &=&\left( C_{13}a_{11}+C_{23}a_{21}\right) \left(
a_{11}a_{22}-a_{12}a_{21}\right) , \\
C_{14}^{\prime }
&=&C_{13}a_{11}a_{34}+C_{14}a_{11}a_{44}+C_{23}a_{21}a_{34}+C_{24}a_{21}a_{44}-C_{45}a_{51}a_{44},
\\
C_{23}^{\prime } &=&\left( C_{13}a_{12}+C_{23}a_{22}\right) \left(
a_{11}a_{22}-a_{12}a_{21}\right) , \\
C_{24}^{\prime }
&=&C_{13}a_{12}a_{34}+C_{14}a_{12}a_{44}+C_{23}a_{22}a_{34}+C_{24}a_{22}a_{44}-C_{45}a_{52}a_{44},
\\
C_{45}^{\prime } &=&C_{45}a_{44}^{2}\left( a_{11}a_{22}-a_{12}a_{21}\right) .
\end{eqnarray*}%
It is clear that if $C_{13}=C_{23}=0$ then $C_{13}^{\prime }=C_{23}^{\prime
}=0$. From here, 
\[\mbox{Orb}\left( \left\langle \left[ \theta \right]
:\left( C_{13},C_{23}\right) =\left( 0,0\right) \right\rangle \right) \cap %
\mbox{Orb}\left( \left\langle \left[ \theta \right] :\left(
C_{13},C_{23}\right) \neq \left( 0,0\right) \right\rangle \right) =\emptyset\] 
 and hence $\rm{Aut}\left( {\bf M}_{5,1}\right) $ has at least two orbits on 
${\bf U}_{1}\left( {\bf M}_{5,1}\right) $.

\begin{enumerate}[$\bullet$]
    \item  \textsc{Case 1. } $\left( C_{13},C_{23}\right) =\left( 0,0\right) $.
Let $\phi $ be the following automorphism%
\begin{equation*}
\phi =%
\begin{bmatrix}
C_{45}^{-1} & 0 & 0 & 0 & 0 \\ 
0 & 1 & 0 & 0 & 0 \\ 
0 & 0 & C_{45}^{-1} & 0 & 0 \\ 
0 & 0 & 0 & 1 & 0 \\ 
C_{45}^{-2}C_{14} & C_{45}^{-1}C_{24} & 0 & 0 & C_{45}^{-1}%
\end{bmatrix}%
. 
\end{equation*}%
Then $\phi {\bf W}=\left\langle \left[ \Delta _{45}\right] \right\rangle 
$. So we get the algebra:%
\begin{equation*}
{\bf B}_{6,2}:\left[ e_{1},e_{2}\right] =e_{3},\left[ e_{3},e_{4}\right]
=e_{5},[e_{4},e_{5}]=e_{6}. 
\end{equation*}

\item \textsc{Case 2. } $\left( C_{13},C_{23}\right) \neq \left( 0,0\right) $.
Suppose first that\textsc{\ }$C_{13}\neq 0$. Let $\phi $ be the following
automorphism 
\begin{equation*}
\phi =\allowbreak \allowbreak 
\begin{bmatrix}
C_{13}C_{45}^{-1} & -C_{13}^{-4}C_{23}C_{45}^{2} & 0 & 0 & 0 \\ 
0 & C_{13}^{-3}C_{45}^{2} & 0 & 0 & 0 \\ 
0 & 0 & C_{13}^{-2}C_{45}^{-1}C_{45}^{2} & 0 & 0 \\ 
0 & 0 & 0 & C_{13}C_{45}^{-1} & 0 \\ 
C_{13}C_{14}C_{45}^{-2} &  \phi_{25} & 0 & 0 & C_{13}^{-1}%
\end{bmatrix}%
, 
\end{equation*}%
where 
$$\begin{array}{lll}
\phi_{25}=C_{24}C_{45}C_{13}^{-3}-C_{13}^{-4}C_{14}C_{23}C_{45}
\end{array}.$$

Then $\phi {\bf W}=\left\langle \left[ \Delta _{13}\right] +\left[
\Delta _{45}\right] \right\rangle $. Hence we get a representative $%
\left\langle \left[ \Delta _{13}\right] +\left[ \Delta _{45}\right]
\right\rangle $. Assume now that $C_{13}=0$. Then $C_{23}\neq 0$. Let $\phi $
be the following automorphism%
\begin{equation*}
\phi =\allowbreak \allowbreak 
\begin{bmatrix}
0 & -C_{23}^{-3}C_{45}^{2} & 0 & 0 & 0 \\ 
C_{23}C_{45}^{-1} & 0 & 0 & 0 & 0 \\ 
0 & 0 & C_{23}^{-2}C_{45} & 0 & 0 \\ 
0 & 0 & 0 & C_{23}C_{45}^{-1} & 0 \\ 
C_{23}C_{24}C_{45}^{-2} & -C_{14}C_{23}^{-3}C_{45} & 0 & 0 & C_{23}^{-1}%
\end{bmatrix}%
. 
\end{equation*}%
Then we get again a representative $\left\langle \left[ \Delta _{13}\right] +%
\left[ \Delta _{45}\right] \right\rangle $. This shows that if $\left(
C_{13},C_{23}\right) \neq \left( 0,0\right) $, then we get only one algebra:%
\begin{equation*}
{\bf B}_{6,3}:\left[ e_{1},e_{2}\right] =e_{3},\left[ e_{3},e_{4}\right]
=e_{5},\left[ e_{1},e_{3}\right] =e_{6},[e_{4},e_{5}]=e_{6}. 
\end{equation*}
\end{enumerate}

\section{The geometric classification of nilpotent binary Lie algebras}

\subsection{Definitions and notation}
Given an $n$-dimensional complex vector space ${\bf V}$, the set $\rm{Hom}({\bf V} \otimes {\bf V},{\bf V}) \cong {\bf V}^* \otimes {\bf V}^* \otimes {\bf V}$ 
is a vector space of dimension $n^3$. This space has a structure of the affine variety $\mathbb{C}^{n^3}.$ Fix a basis $e_1,\dots,e_n$ of ${\bf V}$. Every $\mu\in \rm{Hom}({\bf V} \otimes {\bf V},{\bf V})$ is determined by the $n^3$ structure constants $c_{i,j}^k\in\mathbb{C}$ such that
$\mu(e_i\otimes e_j)=\sum\limits_{k=1}^nc_{i,j}^ke_k$. A subset of $\rm{Hom}({\bf V} \otimes {\bf V},{\bf V})$ is {\it Zariski-closed} if it can be defined by a set of polynomial equations in the variables $c_{i,j}^k$ ($1\le i,j,k\le n$).

Let $T$ be a set of polynomial identities.
All algebra structures on ${\bf V}$ satisfying polynomial identities from $T$ form a Zariski-closed subset of the variety $\rm{Hom}({\bf V} \otimes {\bf V},{\bf V})$. We denote this subset by $\mathbb{L}(T)$.
The general linear group $\rm{GL}({\bf V})$ acts on $\mathbb{L}(T)$ by conjugations:
$$ (g * \mu )(x\otimes y) = g\mu(g^{-1}x\otimes g^{-1}y)$$ 
for $x,y\in {\bf V}$, $\mu\in \mathbb{L}(T)\subset \rm{Hom}({\bf V} \otimes {\bf V},{\bf V})$ and $g\in \rm{GL}({\bf V})$.
Thus, $\mathbb{L}(T)$ is decomposed into $\rm{GL}({\bf V})$-orbits that correspond to the isomorphism classes of algebras. 
Let $O(\mu)$ denote the orbit of $\mu\in\mathbb{L}(T)$ under the action of $\rm{GL}({\bf V})$ and let $\overline{O(\mu)}$ denote the Zariski closure of $O(\mu)$.

Let ${\bf A}$ and ${\bf B}$ be two $n$-dimensional algebras satisfying identities from $T$ and $\mu,\lambda \in \mathbb{L}(T)$ represent ${\bf A}$ and ${\bf B}$ respectively.
We say that ${\bf A}$ degenerates to ${\bf B}$ and write ${\bf A}\to {\bf B}$ if $\lambda\in\overline{O(\mu)}$.
In this case   $\overline{O(\lambda)}\subset\overline{O(\mu)}$. Hence, the definition of a degeneration does not depend on the choice of $\mu$ or $\lambda$. If ${\bf A}\not\cong {\bf B}$, then the assertion ${\bf A}\to {\bf B}$ is a {\it proper degeneration}. We write ${\bf A}\not\to {\bf B}$ if $\lambda\not\in\overline{O(\mu)}$.

Let ${\bf A}$ be represented by $\mu\in\mathbb{L}(T)$. Then  ${\bf A}$ is  {\it rigid} in $\mathbb{L}(T)$ if $O(\mu)$ is an open subset of $\mathbb{L}(T)$.
 Recall that a subset of a variety is  irreducible if it cannot be represented as a union of two non-trivial closed subsets. 
 A maximal irreducible closed subset of a variety is  an {\it irreducible component}.
It is well known that every affine variety can be represented as a finite union of its irreducible components in a unique way.
The algebra ${\bf A}$ is rigid in $\mathbb{L}(T)$ if and only if $\overline{O(\mu)}$ is an irreducible component of $\mathbb{L}(T)$.

%Given spaces $U$ and $W$, we write simply $U>W$ instead of $\dim\,U>\dim\,W$.

%We use also the notation $U\circ W=UW+WU$.

%We collect the required information about the algebras under consideration in Table 1. In this table in the first column we write the names of the algebras. For every $i$ the symbols $\mathfrak{N}_i$ and $\mathfrak{N}_i^{\mathbb{C}}$ stand for Leibniz-Zinbiel algebras;  $\mathfrak{L}_i$ and $\mathfrak{L}_i^{\mathbb{C}}$ stand for nilpotent Leibniz algebras,  $\mathfrak{Z}_i$ and $\mathfrak{Z}_i^{\mathbb{C}}$ stand for Zinbiel algebras. In the second column we give the names of these algebras from the articles \cite{rom,AKO10,alb06,dzhuma5}. In the third  column we give the multiplication tables in a fixed basis $e_1,\ldots,e_n$ of $V$. All the products of basis elemmaents omitted in the table are equal to zero. In the last columns we give dimensions of the algebra of derivations, of the square of an algebra, of the left annihilator, of the right annihilator, %of the anticommutative centre; %of a maximal zero subalgebra; %of the simmetrized algebra; and the center.

\subsection{Degenerations of algebras} 

We use the methods applied to Lie algebras in \cite{GRH2}.
First of all, if ${\bf A}\to {\bf B}$ and ${\bf A}\not\cong {\bf B}$, then $\dim \mathfrak{Der}({\bf A})<\dim \mathfrak{Der}({\bf B})$, where $\mathfrak{Der}({\bf A})$ is the Lie algebra of derivations of ${\bf A}$. We will compute the dimensions of algebras of derivations and will check the assertion ${\bf A}\to {\bf B}$ only for such ${\bf A}$ and ${\bf B}$ that $\dim \mathfrak{Der}({\bf A})<\dim \mathfrak{Der}({\bf B})$. 
%Secondly, if $A\to C$ and $C\to B$ then $A\to B$. If there is no $C$ such that $A\to C$ and $C\to B$ are proper degenerations, then the assertion $A\to B$ is called a {\it primary degeneration}. If $Der(A)<Der(B)$ and there are no $C$ and $D$ such that $C\to A$, $B\to D$, $C\not\to D$ and one of the assertions $C\to A$ and $B\to D$ is a proper degeneration,  then the assertion $A \not\to B$ is called a {\it primary non-degeneration}. It suffices to prove only primary degenerations and non-degenerations to describe degenerations in the variety under consideration. It is easy to see that every algebra degenerates to the algebra with zero multiplication. From now on we use this fact without mentioning it.

%Degenerations of four dimensional and nilpotent five and six dimensional Lie algebras were described in \cite{BC99,S90,GRH}.
%Since the set $\mathbb{L}(T)$ is closed for every $T$, a Lie algebra cannot degenerate to a non-Lie algebra.
%So when we want to add Malcev or BL algebras to Lie algebras we don't have to check the degenerations from Lie algebras to every of the added algebras.

To prove degenerations, we will construct families of matrices parametrized by $t$. Namely, let ${\bf A}$ and ${\bf B}$ be two algebras represented by the structures $\mu$ and $\lambda$ from $\mathbb{L}(T)$ respectively. Let $e_1,\dots, e_n$ be a basis of $\bf  V$ and let $c_{i,j}^k$ ($1\le i,j,k\le n$) be the structure constants of $\lambda$ in this basis. If there exist $a_i^j(t)\in\mathbb{C}$ ($1\le i,j\le n$, $t\in\mathbb{C}^*$) such that $E_i^t=\sum\limits_{j=1}^na_i^j(t)e_j$ ($1\le i\le n$) form a basis of ${\bf V}$ for every $t\in\mathbb{C}^*$, and the structure constants of $\mu$ in the basis $E_1^t,\dots, E_n^t$ are  polynomials $c_{i,j}^k(t)\in\mathbb{C}[t]$ such that $c_{i,j}^k(0)=c_{i,j}^k$, then ${\bf A}\to {\bf B}$. In this case  $E_1^t,\dots, E_n^t$ is  a {\it parametrized basis} for ${\bf A}\to {\bf B}$.

\subsection{The geometric classification of $6$-dimensional nilpotent binary Lie  algebras}
The geometric classification of $6$-dimensional nilpotent binary Lie algebras is  based on the description of all degenerations of $6$-dimensional Malcev algebras.
Thanks to \cite{kpv}, the variety of $6$-dimensional nilpotent Malcev algebras has only two irreducible components defined by the following algebras:

$$\begin{array}{llllllllllllllllllllllllll}
    
g_{6}  &:&  [e_1,e_2]=e_3, &[e_1,e_3]=e_4, &[e_1,e_4]=e_5,  & \\
&&[e_2,e_3]=e_5.& [e_2,e_5]=e_6, &[e_3,e_4]=-e_6,\\ 
    
{\bf M}_{6}^{\epsilon}      &:&  [e_1,e_2]=e_3, &[e_1,e_3]=e_5, &[e_1,e_5]=e_6,&\\ 
&&[e_2,e_4]=\epsilon e_5, &[e_3,e_4]=e_6.  
\end{array}$$

The main result of the present section is the following theorem.

\begin{thm}\label{geobl}
The variety of $6$-dimensional nilpotent binary Lie algebras over $\mathbb C$ has two  irreducible components
defined by the rigid algebras ${\bf B}_{6,3}$ and $g_{6}$. %(=g_6$.
\end{thm}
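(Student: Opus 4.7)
The plan is to combine the algebraic classification with the known structure of the Malcev variety. By Theorem \ref{main2} together with \cite{Graaf, Ann2}, every $6$-dimensional nilpotent binary Lie algebra is isomorphic either to a nilpotent Lie algebra, to a non-Lie nilpotent Malcev algebra, or to one of the three non-Malcev families ${\bf B}_{6,1}^{\alpha}$, ${\bf B}_{6,2}$, ${\bf B}_{6,3}$. Since the Malcev identity is a polynomial identity, the set of $6$-dimensional nilpotent Malcev algebras is a Zariski-closed subvariety, whose irreducible components are $\overline{O(g_6)}$ and $\overline{\bigcup_{\epsilon} O({\bf M}_6^{\epsilon})}$ by \cite{kpv}. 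Thus the binary Lie variety is covered by $\overline{O(g_6)}$, $\overline{\bigcup_{\epsilon} O({\bf M}_6^{\epsilon})}$, and the orbit closures of the three non-Malcev families.

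The theorem then reduces to three claims: (i) $g_6$ is rigid in the binary Lie variety; (ii) ${\bf B}_{6,3}$ is rigid; (iii) there are proper degenerations ${\bf B}_{6,3}\to{\bf B}_{6,2}$, ${\bf B}_{6,3}\to{\bf B}_{6,1}^{\alpha}$ for every $\alpha\in\mathbb{C}$, and ${\bf B}_{6,3}\to{\bf M}_6^{\epsilon}$ for every $\epsilon\in\mathbb{C}$. Granting (iii), all non-Malcev algebras and the whole ${\bf M}_6^{\epsilon}$-component sit inside $\overline{O({\bf B}_{6,3})}$, so the variety decomposes as $\overline{O(g_6)}\cup\overline{O({\bf B}_{6,3})}$; combined with (i) and (ii) these are the only two irreducible components.

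For (i) and (ii) I would compute $\dim \mathfrak{Der}({\bf A})$ for every ${\bf A}$ on the list. Since a proper degeneration ${\bf X}\to{\bf A}$ forces $\dim \mathfrak{Der}({\bf X})<\dim \mathfrak{Der}({\bf A})$, a sufficiently low value for $\dim \mathfrak{Der}(g_6)$ and $\dim \mathfrak{Der}({\bf B}_{6,3})$ forbids every proper degeneration into them, hence yielding rigidity. If a few borderline cases survive the derivation-dimension test, I would use supplementary invariants such as $\dim {\bf A}^{2}$, $\dim \rm{Ann}({\bf A})$, or $\dim \{\mathrm{ad}_x : x \in {\bf A}\}$, all of which are upper semicontinuous under degeneration.

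For (iii), I would construct explicit parametrized bases $E_i^t=\sum_j a_i^j(t)e_j$. The degenerations ${\bf B}_{6,3}\to {\bf B}_{6,2}$ and ${\bf B}_{6,3}\to {\bf B}_{6,1}^{\alpha}$ should be routine, implemented by a diagonal or unipotent change of basis depending polynomially on $t$ that zeroes out or rescales individual structure constants. The main obstacle I anticipate is ${\bf B}_{6,3}\to{\bf M}_6^{\epsilon}$: the bracket tables are combinatorially quite different, since ${\bf M}_6^{\epsilon}$ has brackets $[e_1,e_5]=e_6$ and $[e_2,e_4]=\epsilon e_5$ with no direct analogue in ${\bf B}_{6,3}$, and the free parameter $\epsilon$ must emerge in the limit as a ratio of vanishing quantities. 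Producing a single family $E_i^t(\epsilon)$ that yields ${\bf M}_6^{\epsilon}$ as $t\to 0$ while simultaneously sending the surplus relations of ${\bf B}_{6,3}$ to zero is the delicate step; I would guide the ansatz by matching the lower central series filtrations and the centroid actions of the two algebras.
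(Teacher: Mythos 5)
Your strategy coincides with the paper's: show that every algebra in the variety lies in $\overline{O({\bf B}_{6,3})}\cup\overline{O(g_6)}$ by exhibiting degenerations from ${\bf B}_{6,3}$ to all non-Malcev algebras and to the whole family ${\bf M}_6^{\epsilon}$, then observe that $\dim\mathfrak{Der}({\bf B}_{6,3})=\dim\mathfrak{Der}(g_6)=8$ rules out a degeneration in either direction between the two, so these two orbit closures are precisely the irreducible components. (One simplification you miss: over $\mathbb{C}$ one has ${\bf B}_{6,1}^{\alpha}\cong{\bf B}_{6,1}^{1}$ for all $\alpha\neq 0$, so only ${\bf B}_{6,1}^{1}$ and ${\bf B}_{6,1}^{0}$ need be reached; the paper does this via ${\bf B}_{6,3}\to{\bf B}_{6,1}^{1}\to{\bf B}_{6,1}^{0}$. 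Also, your separate rigidity claims (i) and (ii) via a survey of derivation dimensions are unnecessary: once (iii) and the mutual non-degeneration are in hand, rigidity of both algebras is automatic, which is fortunate because the derivation-dimension test alone cannot exclude degenerations between two algebras whose derivation algebras both have dimension $8$.)

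The genuine gap is that the entire mathematical content of this proof, beyond citing \cite{kpv} for the Malcev part, consists of the explicit parametrized bases realizing the degenerations, and you do not produce them. You correctly identify ${\bf B}_{6,3}\to{\bf M}_6^{\epsilon}$ as the delicate step, but an ansatz guided by ``matching filtrations and centroid actions'' is not a proof; without a concrete family $E_i^t$ the covering claim in (iii) is unestablished and the theorem does not follow. For the record, the paper's witness is genuinely non-obvious: it uses complex coefficients and mixes graded levels, e.g. $E_1^t=te_1-ite_4$, $E_2^t=e_2-\epsilon e_3+(\epsilon^2-\epsilon)ie_5$, $E_3^t=te_3-i\epsilon te_5+(\epsilon^2-2\epsilon)te_6$, $E_4^t=-t^2e_1$, $E_5^t=it^2e_5+(1-\epsilon)t^2e_6$, $E_6^t=t^3e_6$, with $\epsilon$ entering through the coefficients rather than as a limit of ratios. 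Until such bases are written down and verified (the remaining ones, for ${\bf B}_{6,2}$ and ${\bf B}_{6,1}^{1}$, are indeed routine diagonal or unipotent changes, as you predict), the proposal is a correct plan rather than a proof.
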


\begin{proof}
Note that $\dim \  \mathfrak{Der}({\bf B}_{6,3})= \dim \    \mathfrak{Der}(g_{6})=8$
and there is no  degeneration between these algebras.
%By the calculation of the dimension of the orbit closure of the rest of the algebras.
All other algebras degenerate to one of ${\bf B}_{6,3}$ and $g_{6}$; the latter
  algebras cannot degenerate to each other because of the dimensions of the
  derivation spaces; therefore there must be two components in which the orbits
  of the given algebras are open.
%it is easy to see that algebras ${\bf B}_{6,3}$ and $g_{6}$ define two irreducibe components and they are rigid.
Now we  construct some degenerations to prove that all non-Lie Malcev and all non-Malcev binary  Lie algebras 
lie in the irreducible component defined by the algebra ${\bf B}_{6,3}.$

\begin{enumerate}[$\bullet$]
    \item  The parametrized basis formed by
$$\begin{array}{ll}
    E_1^t=te_1-ite_4, & E_2^t=e_2-\epsilon e_3 + (\epsilon^2-\epsilon)ie_5, \\ 
    E_3^t=te_3-i\epsilon t e_5+(\epsilon^2-2\epsilon)te_6, & E_4^t=-t^2e_1, \\ 
    E_5^t= it^2e_5+(1-\epsilon)t^2e_6, & E_6^t=t^3e_6  
\end{array}$$
gives the degeneration  ${\bf B}_{6,3} \to  {\bf M}_6^{\epsilon}.$

\item The parametrized basis formed by
$$\begin{array}{llllll}
    E_1^t=te_1, & E_2^t=t^{-1}e_2, & E_3^t=e_3, &
    E_4^t=e_4, & E_5^t= e_5, & E_6^t=e_6  
\end{array}$$
gives the degeneration  ${\bf B}_{6,3} \to  {\bf B}_{6,2}.$

\item The parametrized basis formed by
$$\begin{array}{lll}
    E_1^t=te_1-e_3-te_5, & E_2^t=e_2+t e_5, & E_3^t=e_3- e_4, \\ 
    E_4^t=te_4, & E_5^t= e_5, & E_6^t=te_6  
\end{array}$$
gives the degeneration  ${\bf B}_{6,3} \to  {\bf B}^{1}_{6,1}.$

\item The parametrized basis formed by
$$\begin{array}{lll}
    E_1^t=t^{-1}e_1, & E_2^t=e_2, & E_3^t=e_3,  \\
    E_4^t=t^{-1}e_4, & E_5^t= t^{-1}e_5, & E_6^t=t^{-2}e_6  
\end{array}$$
gives the degeneration  ${\bf B}^1_{6,1} \to  {\bf B}^0_{6,1}.$
\end{enumerate}

The listed degenerations imply that
${\bf B}_{6,4} \to  {\bf M}_6^{\epsilon}, {\bf B}^1_{6,1}, {\bf B}^0_{6,1}, {\bf B}_{6,2},$
and from the description of all degenerations of the Malcev part of this variety \cite{kpv}, 
we   see that the variety of $6$-dimensional nilpotent binary Lie algebras has only two irreducible components defined by ${\bf B}_{6,3}$ and $g_{6}$.

\end{proof}

\section{Application: classification of anticommutative $\mathfrak{CD}$-algebras}

The class of non-associative $\mathfrak{CD}$-algebras is defined by a certain 
property of Jordan and Lie algebras: 
\[ \mbox{{\it every commutator of two  multiplication operators is a derivation}.}\]
Namely, an algebra ${\bf A}$ is a $\mathfrak{CD}$-algebra if and only if  
\[ T_xT_y -T_yT_x \in \mathfrak{Der} ({\bf A}), \mbox{ for all }x,y \in {\bf A}, T_z \in \{ R_z,L_z\}. \] 
It is easy to see that the class of $\mathfrak{CD}$-algebras is defined  by three identities of degree $4.$
In the case of commutative and anticommutative $\mathfrak{CD}$-algebras,
there is only one defined identity:
\begin{equation}\label{acd} [[[x, y] , a] , b] - [[[x, y] , b] , a] = \end{equation}
$$[[[x, a] , b] , y] - [[[x, b], a],  y]+
[x, [[y, a] , b]]-[x, [[y, b] , a]].$$

If we set $a = y$ and $b = x$ in (\ref{acd}) then
$[[[x, y] , y] , x] = [[[x, y] , x] , y].$
We conclude that every anticommutative $\mathfrak{CD}$-algebra is a binary Lie algebra. 
So the variety of anticommutative $\mathfrak{CD}$-algebras is between Lie and binary-Lie algebras. 
It is clear that if an algebra ${\bf A}$  satisfies   ${\bf A}^4 = 0,$ then ${\bf A}$ is a $\mathfrak{CD}$-algebra.
So  every  $5$-dimensional nilpotent binary Lie algebra is a $\mathfrak{CD}$-algebra.
By some easy checking of identity (\ref{acd}) for all $6$-dimensional nilpotent binary Lie algebras, we obtain the following result.

\begin{thm}\label{algacd}

Let ${\bf A}$ be a  $6$-dimensional nilpotent anticommutative $\mathfrak{CD}$-algebra  over $\mathbb F.$
Then ${\bf A}$ is isomorphic to 
a Malcev algebra or to $ {\bf B}_{6,1}^{\alpha}.$
Every $6$-dimensional nilpotent Malcev algebra over $\mathbb F$ is a $\mathfrak{CD}$-algebra.
\end{thm}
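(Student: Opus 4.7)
The plan is to reduce the theorem to a finite direct check against the classification list. Setting $a=y$, $b=x$ in (\ref{acd}) yields $[[[x,y],y],x]=[[[x,y],x],y]$, so, as already noted in the excerpt, every anticommutative $\mathfrak{CD}$-algebra is a binary Lie algebra. Consequently every $6$-dimensional nilpotent anticommutative $\mathfrak{CD}$-algebra must appear in the complete classification of $6$-dimensional nilpotent binary Lie algebras, which partitions into three pieces: the nilpotent Lie algebras of \cite{Graaf}, the nilpotent non-Lie Malcev algebras of \cite{Ann2}, and the three non-Malcev families ${\bf B}_{6,1}^{\alpha}$, ${\bf B}_{6,2}$, ${\bf B}_{6,3}$ from Section \ref{dim 6}. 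So the task becomes: test (\ref{acd}) on each candidate.

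First I would dispose of the Lie case: by the Jacobi identity, $[[w,a],b]-[[w,b],a]=[w,[a,b]]$, so the operator $D_{a,b}(w):=[[w,a],b]-[[w,b],a]$ coincides with the right multiplication $R_{[a,b]}$, which is a derivation in any Lie algebra; hence (\ref{acd}) is automatic. To confirm the Malcev assertion of the statement I would verify (\ref{acd}) directly on each $6$-dimensional nilpotent non-Lie Malcev algebra from the list in \cite{Ann2}. By multilinearity and anticommutativity it is enough to evaluate both sides of the identity on unordered $4$-tuples of basis vectors, which is a finite mechanical computation for each algebra, easily handled on a computer.

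Finally, for the three non-Malcev families I would perform the analogous check. The expected and asserted outcome is that ${\bf B}_{6,1}^{\alpha}$ satisfies (\ref{acd}) for every $\alpha$, so it is an anticommutative $\mathfrak{CD}$-algebra, whereas ${\bf B}_{6,2}$ and ${\bf B}_{6,3}$ fail it; for these two algebras it suffices to exhibit a single explicit basis quadruple $(x,y,a,b)$ at which the two sides of (\ref{acd}) disagree, thereby excluding them from the $\mathfrak{CD}$-class. The main obstacle is not conceptual but combinatorial: the list of algebras and of basis quadruples to test is large, so the real work lies in organizing the verification cleanly and accurately rather than in any single deep step.
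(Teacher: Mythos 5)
Your proposal is correct and follows essentially the same route as the paper, which likewise reduces the statement to a finite verification of identity (\ref{acd}) on the already-obtained classification of $6$-dimensional nilpotent binary Lie algebras (Lie, non-Lie Malcev, and the three non-Malcev families). The only point worth emphasizing in carrying out the check is that the derivation condition for $D_{a,b}(w)=[[w,a],b]-[[w,b],a]$ must be tested on \emph{all} pairs of basis vectors, including those with zero product --- e.g.\ for ${\bf B}_{6,2}$ the failure occurs at $x=e_2$, $y=e_4$, $a=e_1$, $b=e_4$, where $[e_2,e_4]=0$ but the right-hand side of (\ref{acd}) equals $e_6$.
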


As a corollary of Theorem \ref{algacd}, we obtain the following result.

\begin{thm}
The variety of $6$-dimensional nilpotent anticommutative $\mathfrak{CD}$-algebras over $\mathbb C$ has three irreducible components
defined by 
the family of algebras ${\bf M}_{6}^{\epsilon}$ and 
the rigid algebras ${\bf B}^1_{6,1}, \ g_{6}$. %(=g_6$.
\end{thm}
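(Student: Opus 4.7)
The plan is to combine Theorem \ref{algacd} with the geometric classification of $6$-dimensional nilpotent Malcev algebras from \cite{kpv} and extract the irreducible components by a component-counting argument. First I would use Theorem \ref{algacd} to describe the variety $\mathcal{V}$ of $6$-dimensional nilpotent anticommutative $\mathfrak{CD}$-algebras set-theoretically: every such algebra is either a nilpotent Malcev algebra or isomorphic to some ${\bf B}_{6,1}^{\alpha}$. Over $\mathbb{C}$, the relation ${\bf B}_{6,1}^{\alpha}\cong{\bf B}_{6,1}^{\lambda^{2}\alpha}$ combined with the surjectivity of squaring on $\mathbb{C}^{\ast}$ collapses the family to exactly two isomorphism classes, namely ${\bf B}_{6,1}^{1}$ (representing every $\alpha\neq 0$) and ${\bf B}_{6,1}^{0}$. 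Therefore $\mathcal{V}$ is the union of the Malcev locus $\mathcal{M}\subseteq\mathcal{V}$ together with the orbits $O({\bf B}_{6,1}^{1})$ and $O({\bf B}_{6,1}^{0})$.

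Next I would introduce three candidate components $C_{1}=\overline{O(g_{6})}$, $C_{2}=\overline{\bigcup_{\epsilon\in\mathbb{C}}O({\bf M}_{6}^{\epsilon})}$ and $C_{3}=\overline{O({\bf B}_{6,1}^{1})}$. The first two are the irreducible components of $\mathcal{M}$ by \cite{kpv}, while $C_{3}$ is irreducible as the closure of a single $\rm{GL}$-orbit. Applying the parametrized basis of Theorem \ref{geobl} that realizes ${\bf B}_{6,1}^{1}\to{\bf B}_{6,1}^{0}$ places $O({\bf B}_{6,1}^{0})\subseteq C_{3}$, and together with the previous paragraph this yields $\mathcal{V}=C_{1}\cup C_{2}\cup C_{3}$.

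It then remains to check that none of $C_{1},C_{2},C_{3}$ lies in the union of the other two. The inclusion $C_{3}\not\subseteq C_{1}\cup C_{2}$ is immediate: the Malcev identity is a Zariski-closed condition, so $\mathcal{M}=C_{1}\cup C_{2}$ is closed in $\mathcal{V}$, whereas ${\bf B}_{6,1}^{1}$ is non-Malcev. For the inclusions involving $C_{1}$ and $C_{2}$, irreducibility of each $C_{i}$ reduces the question, via the known incomparability of the two Malcev components, to ruling out the two proper degenerations ${\bf B}_{6,1}^{1}\to g_{6}$ and ${\bf B}_{6,1}^{1}\to{\bf M}_{6}^{\epsilon}$ (for at least one $\epsilon$). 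Once both are excluded, the three $C_{i}$ are pairwise incomparable irreducible closed subsets covering $\mathcal{V}$, hence exactly its irreducible components.

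The hard part will be these two non-degeneration assertions, for which I would apply the standard inequality $\dim\mathfrak{Der}({\bf A})<\dim\mathfrak{Der}({\bf B})$ required of a proper degeneration ${\bf A}\to{\bf B}$. From the proper degeneration ${\bf B}_{6,3}\to{\bf B}_{6,1}^{1}$ displayed in the proof of Theorem \ref{geobl} and the equality $\dim\mathfrak{Der}({\bf B}_{6,3})=8$ recorded there, we obtain $\dim\mathfrak{Der}({\bf B}_{6,1}^{1})\geq 9$; combined with $\dim\mathfrak{Der}(g_{6})=8$, this immediately precludes ${\bf B}_{6,1}^{1}\to g_{6}$. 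For the family ${\bf M}_{6}^{\epsilon}$ the same test applies once one computes $\dim\mathfrak{Der}({\bf M}_{6}^{\epsilon})$ for generic $\epsilon$ and checks that it does not exceed $\dim\mathfrak{Der}({\bf B}_{6,1}^{1})$; this is a direct calculation on the structure constants and constitutes the only genuinely computational step of the argument.
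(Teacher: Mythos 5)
Your overall strategy --- decompose the variety into the three candidate irreducible closed sets $C_1=\overline{O(g_6)}$, $C_2=\overline{\bigcup_\epsilon O({\bf M}_6^\epsilon)}$, $C_3=\overline{O({\bf B}_{6,1}^{1})}$ and prove they are pairwise incomparable --- is the same as the paper's, and everything up to the last step is sound: the set-theoretic description via Theorem \ref{algacd}, the collapse of the family ${\bf B}_{6,1}^{\alpha}$ to $\alpha\in\{0,1\}$ over $\mathbb C$, the closedness of the Malcev condition to get $C_3\not\subseteq C_1\cup C_2$, and the bound $\dim\mathfrak{Der}({\bf B}_{6,1}^{1})\geq 9>8=\dim\mathfrak{Der}(g_6)$ to exclude ${\bf B}_{6,1}^{1}\to g_6$.

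The gap is in the very last step, the one you flag as the only computational one: the derivation test does \emph{not} exclude ${\bf B}_{6,1}^{1}\to{\bf M}_6^{\epsilon}$. A direct computation gives $\dim\mathfrak{Der}({\bf B}_{6,1}^{1})=9$ while $\dim\mathfrak{Der}({\bf M}_6^{\epsilon})=10$ for every $\epsilon$, so the necessary inequality $\dim\mathfrak{Der}({\bf A})<\dim\mathfrak{Der}({\bf B})$ for a proper degeneration ${\bf A}\to{\bf B}$ is satisfied and the test is inconclusive; the check you propose (``$\dim\mathfrak{Der}({\bf M}_6^{\epsilon})$ does not exceed $\dim\mathfrak{Der}({\bf B}_{6,1}^{1})$'') simply fails. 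The paper closes this step differently, by comparing the dimensions of the candidate components themselves: $\dim C_3=36-9=27$ and $\dim C_2=1+(36-10)=27$, so $C_2$ and $C_3$ are two distinct irreducible closed sets of the same dimension, and a proper inclusion of irreducible closed sets forces a strict drop in dimension, hence neither contains the other (distinctness being clear since every algebra in $C_2$ is Malcev while ${\bf B}_{6,1}^{1}$ is not). Note that $\dim\mathfrak{Der}({\bf M}_6^{\epsilon})=\dim\mathfrak{Der}({\bf B}_{6,1}^{1})+1$ is exactly the borderline configuration in which the orbit-by-orbit derivation test can never succeed but the component-dimension count does; indeed it is conceivable that individual algebras ${\bf M}_6^{\epsilon}$ lie in $C_3$ --- only the one-parameter family as a whole cannot. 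To repair your proof, replace the final step by this dimension comparison (or some other invariant that genuinely rules the inclusion out).
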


\begin{Proof}
Using the algebraic classification of $6$-dimensional nilpotent anticommutative $\mathfrak{CD}$-algebras (Theorem \ref{algacd}),
the geometric classification of $6$-dimensional nilpotent binary Lie algebras (Theorem \ref{geobl}),
and the description of all degenerations of $6$-dimensional nilpotent Malcev algebras \cite{kpv},
we obtain that $g_{6}$ is rigid.
Recalling the degeneration  ${\bf B}^1_{6,1} \to {\bf B}^0_{6,1}$ we deduce that ${\bf B}^1_{6,1}$ is rigid.
Irreducible components defined by 
the family of Malcev algebras ${\bf M}_{6}^{\epsilon}$ and the rigid algebra ${\bf B}^1_{6,1}$
have the same dimension and they are different.

\end{Proof}

\end{document}